\newtheorem{theorem}{Theorem}[section]
\newtheorem{lemma}[theorem]{Lemma}
\theoremstyle{definition}
\newtheorem{definition}[theorem]{Definition}
\newtheorem{corollary}[theorem]{Corollary}
\theoremstyle{remark}
\numberwithin{equation}{section}
\DeclareMathOperator\supp{supp}
\DeclareMathOperator\ess{esssup}
\providecommand{\keywords}[1]{\textbf{\textit{Keywords:}} #1}
\providecommand{\subjclass}[1]{\textbf{\textit{MSC2020:}} #1}
\begin{document}

\nocite{*} 

\title{ Real Analytic Methods in the Formulations of some Combinatorial Inequalities}

\author{Hailu Bikila Yadeta \\ email: \href{mailto:haybik@gmail.com}{haybik@gmail.com}}
  \affil{Dilla University, College of  Natural and Computational Sciences, Department of Mathematics, Dilla, Ethiopia}
\date{\today}
\maketitle
\noindent

%

  \begin{abstract}
\noindent  In this paper, we derive some new combinatorial inequalities by applying well known real analytic results like H\"{o}lder's inequality, Young's inequality, and Minkowiski's inequality to the recursively defined sequence $f_n$ of functions

\begin{align*}
  f_0(x) & = \chi_{(-1/2, 1/2)} (x), \nonumber \\
  f_{n+1}(x) & = f_n(x+1/2)+ f_n(x-1/2), n \in \mathbb{N}\,\cup \,\{0\}.
\end{align*}

Towards this goal, we derive the closed form of the aforementioned sequence  $ (f_n)_{n\in \mathbb{N}\,\cup \,\{0\}}$  of functions and  show that it is a sequence of simple functions that are linear combinations of characteristic functions of some unit intervals $ I_{n,i},\, i=0,1, ..., n $, with values the binomial coefficients $ \binom{n}{i}$ on each unit interval $I_{n,i}$. We show that $ f_n \in L^p(\mathbb{R})),\, 1\leq p \leq \infty $. Besides applying real analytic methods to formulate some combinatorial inequalities, we also illustrate the application of some combinatorial identities. For example, we use the Vandermonde convolution (or Vandermonde identity), in the study of some properties of the sequence of functions $ (f_n)_{n\in\mathbb{ N}\cup \{0\}}$. We show how the $L^2$ norm of $f_n$ is related to the Catalan numbers.
\end{abstract}

\keywords{recurrence relation, binomial coefficient, shift operator, simple function, Vandermonde convolution,  $L^p$ space,  H\"{o}lder inequality, Young's inequality, Minkowiski's inequality }\\
\subjclass{Primary 05A10, 05A20}\\
\subjclass{Secondary 26A18, 26A42, 26D07}
\section{Introduction}
The binomial coefficients denoted by $ \binom{n}{i}$  are integers that are the numerical coefficients in the expansion for the polynomial $B_n(x)= (x+ 1)^n $, so that
$$ B_n(x)= \sum_{i=0}^{n} \binom{n}{i}x^i= (x+1)^n. $$
The binomial coefficient
\begin{equation}\label{eq:binomialdefin}
  \binom{n}{i}:=\frac{n!}{i!(n-i)!}= \frac{n(n-1)(n-2)...(n-i+1)}{i(i-1)(i-2)...1},
\end{equation}
 signify the number of ways of selecting $ i $ objects out of $ n $ without replacement. The binomial coefficient $ \binom{n}{i}:= 0 $, if $ i > n $. For historical discussion of introduction of binomial theorem and binomial coefficients one may refer Cooldige \cite{JLC}. Several results  in  combinatorial identities and inequalities are available. To mention few, a complete book of collections combinatorial identities by H. W. Gould \cite{HWG} is remarkable. Horst and Prodinger \cite{AP} present identities and inequalities involving binomial coefficients.

 In this paper, we apply some combinatorial identities in the study some properties of a recursively defined sequences of functions $ (f_n)_{n\in \mathbb{N} \{0\}}$. For example, in calculating the integral of the product $ f_n(x) f_m(x)$, we use the Vandermonde convolution (or Vandermonde identity). For $p =1$ and $ p=2 $, we use some combinatorial identities to  calculate the $ L^p $ norms of $ f_n $. In the recursive definition of the sequence, an initial function $f_0$  and an operator composed of sum  two  shift operators is applied to recursively generate the elements of the sequence.  By writing the closed form of $f_n$, we show that $f_n $ is a sequence of simple functions that are linear combinations of characteristic functions of some unit intervals $ I_{n,i},\, i=0,1, ..., n $. The values of $ f_n$  on $ I_{n,i}$ are the binomial coefficients $\binom{n}{i} $. We show this fact by setting a recurrence relation  whose solutions are binomial coefficients. Other than using an established combinatorial identities, we derive some combinatorial inequalities by using some real analytic methods. For example, we apply H\"{o}lder inequality to two elements $f_m $ and  $f_n$ of  the sequence  to derive some new combinatorial inequalities. We also apply Young inequality to the convolution $ f_m \ast f_n $ to derive another sort of combinatorial inequalities. Similar work is done by applying Minkowiski's inequality. To the best understanding of the author, these inequalities are new results. In the conclusion section, the question wether well-known integer sequence other than the binomial coefficients can be generated by a recursive definition of sequence of functions is raised.
\section{Preliminaries}
In this section, we consider recursive sequences of functions that are generated by the sum of two shift operators $E^{1/2} $  and $ E^{-1/2}$ that are defined as follows:
\begin{equation}\label{eq:leftandrightshiftoprators}
   E^ {\frac{1}{2}}u(x):= u(x + 1/2), \quad  E^ {-\frac{1}{2}} u(x):=u(x-1/2).
\end{equation}
Therefore, the sum of the two shift operators are defined as
$$\left( E^ {\frac{1}{2}} + E^ {-\frac{1}{2}}\right)u(x)= u(x + 1/2 ) + u(x-1/2).$$
\begin{definition}[\textbf{Characteristic function}]
  Let $ A \subset\mathbb{ R} $. The characteristic function of the set $ A $ is denoted by $ \chi_{A}$ is the function with values $ 1$ on $ A $ and equal to zero on the complement $\mathbb{ R} \setminus A $.
\end{definition}
Characteristic functions satisfy the following  properties
\begin{equation}\label{eq:union}
   \chi_{A \cup B}(x)=  \chi_{A}(x)  + \chi_{B}(x)- \chi_{A \cap B}(x),
\end{equation}
in particular if $ A \cap B = \emptyset $, then $\chi_{A \cup B}(x)=  \chi_{A}(x)  + \chi_{B}(x) $,
\begin{equation}\label{eq:intersection}
   \chi_{A \cap B}(x) = \chi_{A}(x) \chi_{B}(x).
\end{equation}

\begin{definition}[\textbf{simple function}]
  A simple function is a finite linear combination of characteristic functions of measurable sets. A simple function $ f: \mathbb{R}\rightarrow \mathbb{R}$, can be written in the form:
  \begin{equation}\label{eq:simplefunction}
    f(x) = \sum_{j= 1}^{n} c_{j}\chi_{J_{j}}(x),
  \end{equation}
  where $c_j \in \mathbb{R}  $ and $J_{j} \subset \mathbb{R}, j=1,2,...n $ are measurable sets.
\end{definition}
A simple function always takes finite distinct values. The representation of a simple function  in form (\ref{eq:simplefunction}) is not unique.  A simple function is nonnegative if its range is a finite subset of $[0, \infty )$. A simple function $f$ in  (\ref{eq:simplefunction}) is said to be in \emph{standard form} if the sets $ J_j, j=1,2,...,n $ are pairwise disjoint and the distinct values of $f$  are $ c_j, j=1,2,...,n $ and $0$. Any simple function can be arranged to be written in standard form.
If simple function $f$ given in (\ref{eq:simplefunction}) is in standard form, then $k$-th power of $f$ is given by
\begin{equation}\label{eq:powerofsimplefunction}
    f^k(x) = \sum_{j= 1}^{n} c^k_{j}\chi_{J_{j}}(x).
  \end{equation}
 Suppose that a function $ \phi (x)$  exhibits a convergent Taylor series expansion
  $$ \phi(x)= \sum_{k=0}^{\infty}\frac{\phi^{(k)}(0)}{k!}x^k, $$
 for all $x$ in some open interval  $I$ about $ 0$  and suppose that the range of $f$ is contained in $ I$. Then we have
 \begin{align*}
    \phi((f(x))= \sum_{k=0}^{\infty}\frac{\phi^{(k)}(0)}{k!}(f(x))^k &=  \sum_{k=0}^{\infty}\frac{\phi^{(k)}(0)}{k!}\sum_{j= 1}^{n} c^k_{j}\chi_{J_{i}}(x) \\
    &=\sum_{j= 1}^{n}\left(\sum_{k=0}^{\infty}\frac{\phi^{(k)}(0)}{k!}c^k_{j}\right)\chi_{J_{j}}(x)= \sum_{j= 1}^{n}\phi(c_j)\chi_{J_{i}}(x)
 \end{align*}

\begin{definition}[\textbf{integrals of non-negative simple functions}]
  The Lebesgue integral over $ \mathbb{R}$ of a nonnegative simple function $ f $ written in the form (\ref{eq:simplefunction})  is
  \begin{equation}\label{eq:lebegueintegralofsimple}
    \int_{\mathbb{R}}f = \int_{\mathbb{R}}f(x)dx = \sum_{j=1}^{n}c_{j}|J_{i}|,
  \end{equation}
  where $|J_{i}| $ is the measure of the set $J_{i} $.
\end{definition}
\begin{definition}[\textbf{Convolution}]\cite{VK}
  Given functions $f$ and $g$ on the real line, we say that their convolution is defined if for almost every $ t\in \mathbb{R} $ the function $ f(\tau)g(t-\tau)$ is Lebesgue integrable over $ \mathbb{R }$ as a function of the variable $ t$. In this case then the convolution of the functions $f$ and $g$ is the function $f\ast g $ defined for almost all $t\in \mathbb{R} $ by the formula
  \begin{equation}\label{eq:convolutionformula}
    \int_{\mathbb{R}}f(\tau)g(t-\tau)d\tau
  \end{equation}
\end{definition}
\section{Recursive sequences defined by sum  of two shift operators}
Consider the sequence of real functions $ (f_n)_{n\in \mathbb{N}\,\cup \,\{0\}}$
\begin{align}\label{eq:sumofshifts}
  f_0(x) & = \chi_{(-1/2, 1/2)} (x), \nonumber \\
  f_{n+1}(x) & = f_n(x+1/2)+ f_n(x-1/2)= \left( E^ {\frac{1}{2}} + E^ {-\frac{1}{2}}\right)f_n(x).
\end{align}
In the upcoming subsections, we study the sequence of functions $f_n$ defined in (\ref{eq:sumofshifts}) including its closed form, the $ L^p $ norm of $f_n$, and the integrals of the form $\int_{-\infty}^{\infty} f_n(x)f_m(x) dx $.

\subsection{Closed form of $f_n$}

 We derive the closed form expression of $f_n$ and then discuss some properties that are satisfied by $f_n$.
  Since $ f_n(x-1/2)$ is a shift of $ f_n$ half units to the right and $ f_n(x+1/2)$ is a shift of $ f_n$ half units to the left, $f_{n+1}$ is the superposition of the two shifted functions. Manual calculation of the first three elements of the sequence yields,
\begin{equation}\label{eq:fone}
  f_{1}(x)= \chi_{(-1,0)}(x ) + \chi_{(0,1)}(x),
\end{equation}
 \begin{equation}\label{eq:ftwo}
  f_{2}(x)= \chi_{(-3/2,-1/2)}(x)+2\chi_{(-1/2,1/2)}(x) + \chi_{(1/2,3/2)}(x),
\end{equation}

 \begin{equation}\label{eq:fthree}
  f_{3}(x)= \chi_{(-2,-1)}(x)+ 3\chi_{(-1,0)}(x) +  3\chi_{(0,1)}(x)+  \chi_{(1,2)}(x).
\end{equation}

 We want to write $f_n$ as linear combinations of characteristic functions of unit intervals with appropriate coefficients. We prove that the desired coefficients are in fact the binomial coefficients $ \binom{n}{i}$.

\begin{theorem}
  For appropriate $a_{n,i}, i=0,1,...,n $, the general closed form of the sequence (\ref{eq:sumofshifts}) is given as
  \begin{equation}\label{eq:generalsumofshifts}
    f_n(x)= \sum_{i=0}^{n} a_{n,i}\chi_{ \left( -\frac{n+1}{2}+i , -\frac{n+1}{2}+ i +1 \right)}(x).
  \end{equation}
  \end{theorem}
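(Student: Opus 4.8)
The plan is to proceed by induction on $n$, tracking not only the structural form of $f_n$ but also the location of the intervals. The base case is immediate: for $n=0$ we have $f_0 = \chi_{(-1/2,1/2)}$, and the interval $(-\tfrac{1}{2}, \tfrac{1}{2})$ is exactly $\left(-\tfrac{0+1}{2}+0,\, -\tfrac{0+1}{2}+0+1\right)$, so the claimed form holds with $a_{0,0}=1$. For the inductive step, I would assume
\[
f_n(x) = \sum_{i=0}^{n} a_{n,i}\,\chi_{\left(-\frac{n+1}{2}+i,\, -\frac{n+1}{2}+i+1\right)}(x)
\]
and compute $f_{n+1}(x) = f_n(x+1/2) + f_n(x-1/2)$ directly. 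The key observation is that shifting a characteristic function is again a characteristic function of a shifted interval: $\chi_J(x \pm 1/2) = \chi_{J \mp 1/2}(x)$. So $f_n(x+1/2)$ is a sum of characteristic functions on the intervals $\left(-\tfrac{n+2}{2}+i,\, -\tfrac{n+2}{2}+i+1\right)$ for $i=0,\dots,n$, and $f_n(x-1/2)$ is a sum on the intervals $\left(-\tfrac{n+2}{2}+(i+1),\, -\tfrac{n+2}{2}+(i+1)+1\right)$ for $i=0,\dots,n$.

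The crucial bookkeeping step is to recognize that both of these families of intervals are sub-families of the single family $\left(-\tfrac{n+2}{2}+j,\, -\tfrac{n+2}{2}+j+1\right)$ for $j=0,1,\dots,n+1$: the left-shifted terms contribute indices $j=0,\dots,n$, and the right-shifted terms contribute indices $j=1,\dots,n+1$. Since these unit intervals, indexed by integer $j$, are pairwise disjoint (adjacent, sharing only endpoints, which have measure zero and do not affect the function except on a null set), I can collect coefficients index by index. This yields $f_{n+1}(x) = \sum_{j=0}^{n+1} a_{n+1,j}\,\chi_{\left(-\frac{n+2}{2}+j,\, -\frac{n+2}{2}+j+1\right)}(x)$ with the recurrence $a_{n+1,0} = a_{n,0}$, $a_{n+1,n+1} = a_{n,n}$, and $a_{n+1,j} = a_{n,j} + a_{n,j-1}$ for $1 \le j \le n$. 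This is exactly the claimed form (\ref{eq:generalsumofshifts}) with $n$ replaced by $n+1$, completing the induction.

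I expect the main obstacle to be purely notational rather than conceptual: one must be careful with the half-integer arithmetic in the interval endpoints and verify that $-\tfrac{(n+1)+1}{2}+i - \tfrac{1}{2} = -\tfrac{(n+2)+1}{2} + i + \tfrac{1}{2}$ does \emph{not} hold but rather that the two shifts interleave correctly so that a shift by $-1/2$ of the $n$-th family and a shift by $+1/2$ of the $n$-th family together tile precisely the $(n+1)$-st family indexed $j=0,\dots,n+1$. A clean way to avoid errors is to introduce the integer-labeled interval $I_{n,i} := \left(-\tfrac{n+1}{2}+i,\, -\tfrac{n+1}{2}+i+1\right)$ at the outset and simply verify the two identities $E^{1/2}\chi_{I_{n,i}} = \chi_{I_{n+1,i}}$ and $E^{-1/2}\chi_{I_{n,i}} = \chi_{I_{n+1,i+1}}$, after which the argument above becomes a one-line reindexing. (The theorem as stated only asserts existence of the $a_{n,i}$; the identification $a_{n,i} = \binom{n}{i}$ via the Pascal recurrence $a_{n+1,j}=a_{n,j}+a_{n,j-1}$ with the stated boundary values presumably follows in the next result, but it falls out immediately from the recurrence derived here together with $\binom{n}{i}=0$ for $i<0$ or $i>n$.)
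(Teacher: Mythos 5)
Your proposal is correct and follows essentially the same route as the paper: induction on $n$, rewriting the two shifted copies of $f_n$ as characteristic functions of the intervals $I_{n+1,j}$, and collecting coefficients to obtain the Pascal-type recurrence $a_{n+1,j}=a_{n,j}+a_{n,j-1}$ with boundary values $a_{n+1,0}=a_{n,0}$ and $a_{n+1,n+1}=a_{n,n}$. Your explicit verification of $E^{1/2}\chi_{I_{n,i}}=\chi_{I_{n+1,i}}$ and $E^{-1/2}\chi_{I_{n,i}}=\chi_{I_{n+1,i+1}}$ is exactly the bookkeeping the paper performs inline, and starting the base case at $n=0$ rather than $n=1$ is a harmless (indeed slightly cleaner) variation.
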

\begin{proof}
  We use induction on $n$. For $ n=1 $,
  $$ f_{1}(x)= \chi_{(-1,0)}(x ) + \chi_{(0,1)}(x) .$$
  Now suppose that the assumption in (\ref{eq:generalsumofshifts}) is true for some $ n\in \mathbb{N} $.
  Then using the operational definition of $ f_n $ given in (\ref{eq:sumofshifts})
  \begin{align*}
   f_{n+1}(x) & = f_n(x-1/2)+ f_n(x+1/2)  \\
     & = \sum_{i=0}^{n} a_{n,i}\left [\chi_{ \left( -\frac{n+1}{2}+ i- 1/2 , -\frac{n+1}{2}+ i +1/2 \right)}(x)+ \chi_{ \left( -\frac{n+1}{2}+i+1/2 , -\frac{n+1}{2}+ i +1+1/2 \right)}(x)\right] \\
     & = \sum_{i=0}^{n+1} a_{n+1,i}\chi_{ \left( -\frac{n+2}{2}+i , -\frac{n+2}{2}+ i +1 \right)}(x),
  \end{align*}
   where
 \begin{equation}\label{eq:cofficentssumdifference}
  \begin{cases}
a_{n+1,0} =  a_{n,0}= 1,\\
a_{n+1,i} =  a_{n,i} +  a_{n,i-1},\quad i=1,3,...,n, \\
a_{n+1,n+1} =  a_{n,n}= 1.
\end{cases}
\end{equation}
 The solution to the recurrence relation given by (\ref{eq:cofficentssumdifference}) are binomial coefficients defined in (\ref{eq:binomialdefin}) and satisfy Pascal's identity
\begin{equation}\label{eq:binomialsum}
   \binom{n+1}{i}= \binom{n}{i} + \binom{n}{i-1},\, 1 \leq i \leq n.
\end{equation}
Therefore, $ a_{n,i}=\binom{n}{i}$ and
\begin{equation}\label{eq:fnwithbinom}
    f_n(x)= \sum_{i=0}^{n} \binom{n}{i}\chi_{ \left( -\frac{n+1}{2}+i , -\frac{n+1}{2}+ i +1 \right)}(x).
  \end{equation}
  This proves the Theorem.
\end{proof}
The sequence $f_n$ defined according to (\ref{eq:sumofshifts}) is a sequence of simple functions which are the linear combination of characteristic functions of the $n+1$ unit intervals:
\begin{equation}\label{eq:intervalIn}
     I_{n, i}:= \left( -\frac{n+1}{2}+i , -\frac{n+1}{2}+ i +1 \right),\quad n \in \mathbb{N} \cup \{0\},\, i=0,1,2,...,n.
  \end{equation}
The values  of $f_n$  on $I_{n, i},\, i=0,1,..n $,  are $\binom{n}{i}$. These are the $n+1$ numbers on the $n+1$-th row of the Pascal's Triangle that is depicted partially in Table 1 below.
\FloatBarrier
\begin{table}[ht]
\centering
\caption{ Pascal Triangle of numbers $\binom{n}{i} $ for $0 \leq n\leq 6 $ and $ 0\leq i \leq n$.}
\begin{tabular}{>{$n=}l<{$\hspace{12pt}}*{13}{c}}
0 &&&&&&&1&&&&&&\\
1 &&&&&&1&&1&&&&&\\
2 &&&&&1&&2&&1&&&&\\
3 &&&&1&&3&&3&&1&&&\\
4 &&&1&&4&&6&&4&&1&&\\
5 &&1&&5&&10&&10&&5&&1&\\
6 &1&&6&&15&&20&&15&&6&&1
\end{tabular}
\end{table}%
\FloatBarrier

\subsection{ Properties of $ f_n $  drawn from closed-form of the sequence $f_n$ }
  In this subsection, we discuss some observable properties of the sequence $f_n$ that we will use  in the upcoming sections.
\begin{definition}
  The support $ \supp (f) $  of a function $f$ is the smallest closed set outside of which the function $f$ vanishes identically. It  is the closure of the set, $\{x |\quad f(x) \neq 0 \}$.
\end{definition}
\begin{theorem}
    For each $n \in \mathbb{N}_0 \cup \{0\}$, the support $\supp(f_n)=[-(n+1)/2, (n+1)/2]$.
  \end{theorem}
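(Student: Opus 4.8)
The plan is to read the answer off directly from the closed form \eqref{eq:fnwithbinom} established in the preceding theorem, rather than running a fresh induction. Write $f_n(x) = \sum_{i=0}^{n}\binom{n}{i}\chi_{I_{n,i}}(x)$ with the unit intervals $I_{n,i}$ as in \eqref{eq:intervalIn}. The first step is the key observation that every binomial coefficient $\binom{n}{i}$ with $0 \le i \le n$ is a \emph{strictly positive} integer, and that the intervals $I_{n,i}$, $i = 0,1,\dots,n$, are pairwise disjoint; hence there is no cancellation in the sum, and $f_n(x) \neq 0$ holds exactly when $x$ lies in one of the $I_{n,i}$. In other words, $\{x : f_n(x)\neq 0\} = \bigcup_{i=0}^{n} I_{n,i}$.

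The second step is to identify this union explicitly. The intervals $I_{n,i} = \bigl(-\tfrac{n+1}{2}+i,\ -\tfrac{n+1}{2}+i+1\bigr)$ for $i=0,1,\dots,n$ are consecutive open unit intervals whose left endpoints increase in steps of $1$, so their union is the open interval $\bigl(-\tfrac{n+1}{2},\ \tfrac{n+1}{2}\bigr)$ with the $n$ interior gridpoints $-\tfrac{n+1}{2}+1,\dots,-\tfrac{n+1}{2}+n$ deleted. The third step is to take the closure: removing finitely many points from an interval does not affect its closure, and the closure of an open interval is the corresponding closed interval, so $\overline{\bigcup_{i=0}^{n} I_{n,i}} = \bigl[-\tfrac{n+1}{2},\ \tfrac{n+1}{2}\bigr]$. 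By the definition of support this yields $\supp(f_n) = \bigl[-\tfrac{n+1}{2},\ \tfrac{n+1}{2}\bigr]$, with the case $n=0$ being the trivial instance $\supp(\chi_{(-1/2,1/2)}) = [-1/2,1/2]$.

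I expect the nearest thing to an obstacle to be the first step — the claim that $f_n$ vanishes exactly off $\bigcup_i I_{n,i}$. This is precisely where positivity of the binomial coefficients is used: it is what rules out the possibility that two terms of the sum overlap and cancel (they cannot, since the intervals are disjoint anyway, but one should still note that a positive coefficient times a characteristic function is nonzero throughout the interval). Everything after that — unioning consecutive intervals, and passing to the closure — is routine point-set topology. An alternative route would be induction on $n$ using $\supp(f_{n+1}) = \bigl(\supp(f_n)+\tfrac12\bigr)\cup\bigl(\supp(f_n)-\tfrac12\bigr)$, obtained from \eqref{eq:sumofshifts}; but that identity again requires nonnegativity of $f_n$ to guarantee no cancellation occurs where the two shifted copies overlap, so the closed-form argument is cleaner and I would present that one.
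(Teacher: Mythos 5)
Your proof is correct. Note that the paper itself states this theorem without any proof (it is presented as an ``observable property'' drawn from the closed form \eqref{eq:fnwithbinom}), so your argument is exactly the intended one, spelled out: positivity of the coefficients $\binom{n}{i}$ plus disjointness of the $I_{n,i}$ gives $\{x: f_n(x)\neq 0\}=\bigcup_{i=0}^{n}I_{n,i}$, this union is $\bigl(-\tfrac{n+1}{2},\tfrac{n+1}{2}\bigr)$ minus finitely many gridpoints, and taking the closure yields $\bigl[-\tfrac{n+1}{2},\tfrac{n+1}{2}\bigr]$. No gaps.
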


\begin{theorem}
   Each $ f_n $ is even and  non-negative valued.
\end{theorem}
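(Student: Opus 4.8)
The plan is to read off both assertions from the closed form \eqref{eq:fnwithbinom}, namely $f_n(x)=\sum_{i=0}^{n}\binom{n}{i}\chi_{I_{n,i}}(x)$, and to offer the recursive definition \eqref{eq:sumofshifts} as an alternative route for the evenness. Non-negativity is immediate: each binomial coefficient $\binom{n}{i}$ is a non-negative integer by \eqref{eq:binomialdefin}, and each $\chi_{I_{n,i}}$ takes only the values $0$ and $1$, so $f_n(x)$ is a finite sum of non-negative terms and hence $f_n(x)\ge 0$ for every $x\in\mathbb{R}$. (Equivalently, by induction: $f_0=\chi_{(-1/2,1/2)}\ge 0$, and if $f_n\ge 0$ then $f_{n+1}(x)=f_n(x+1/2)+f_n(x-1/2)\ge 0$.)

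For evenness, the key observation is that the family of intervals $\{I_{n,i}\}_{i=0}^{n}$ in \eqref{eq:intervalIn} is invariant under reflection through the origin, with $I_{n,i}$ carried to $I_{n,n-i}$. I would verify this by matching endpoints: reflecting $I_{n,i}=\left(-\tfrac{n+1}{2}+i,\,-\tfrac{n+1}{2}+i+1\right)$ gives $-I_{n,i}=\left(-\tfrac{n+1}{2}+(n-i),\,-\tfrac{n+1}{2}+(n-i)+1\right)=I_{n,n-i}$, whence $\chi_{I_{n,i}}(-x)=\chi_{-I_{n,i}}(x)=\chi_{I_{n,n-i}}(x)$ pointwise. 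Combining this with the symmetry $\binom{n}{i}=\binom{n}{n-i}$ of the binomial coefficients and reindexing by $j=n-i$ yields
$$ f_n(-x)=\sum_{i=0}^{n}\binom{n}{i}\chi_{I_{n,n-i}}(x)=\sum_{j=0}^{n}\binom{n}{n-j}\chi_{I_{n,j}}(x)=\sum_{j=0}^{n}\binom{n}{j}\chi_{I_{n,j}}(x)=f_n(x), $$
valid for all $x\in\mathbb{R}$.

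Alternatively, and perhaps more in the spirit of the recursion, evenness can be proved by induction on $n$: $f_0=\chi_{(-1/2,1/2)}$ is even because the interval $(-1/2,1/2)$ is symmetric about $0$; and assuming $f_n$ is even, $f_{n+1}(-x)=f_n(-x+1/2)+f_n(-x-1/2)=f_n(x-1/2)+f_n(x+1/2)=f_{n+1}(x)$, where the middle equality applies the inductive hypothesis $f_n(-y)=f_n(y)$ with $y=x-1/2$ and $y=x+1/2$. I do not expect any genuine obstacle here; the only points requiring a moment's care are the endpoint bookkeeping in the identity $-I_{n,i}=I_{n,n-i}$ and, if one wants a genuinely pointwise (not merely almost-everywhere) statement, the trivial remark that $\chi_I(-x)=\chi_{-I}(x)$ holds at every point, including the interval endpoints where all the functions involved vanish.
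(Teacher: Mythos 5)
Your proof is correct. For non-negativity you argue exactly as the paper does: read off from the closed form \eqref{eq:fnwithbinom} that the values of $f_n$ are binomial coefficients on the intervals $I_{n,i}$ and $0$ elsewhere. For evenness, your ``alternative'' inductive argument is precisely the paper's proof ($f_0$ even, and $f_{n+1}(-x)=f_n(-x+1/2)+f_n(-x-1/2)=f_n(x-1/2)+f_n(x+1/2)=f_{n+1}(x)$), so you have fully reproduced the paper's route. Your primary route for evenness is a genuinely different and equally valid computation: you verify the reflection identity $-I_{n,i}=I_{n,n-i}$ by matching endpoints, combine it with the symmetry $\binom{n}{i}=\binom{n}{n-i}$, and reindex. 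That direct argument has the small advantage of being self-contained once the closed form is established (no induction needed) and of making the geometric symmetry of the support explicit; the paper's induction has the advantage of working straight from the recursive definition \eqref{eq:sumofshifts} without invoking the closed form at all. Both are sound, and your endpoint bookkeeping checks out.
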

\begin{proof}
   From the closed form of $ f_n$ given in (\ref{eq:fnwithbinom}) we see that the values of $f_n $ are $  \binom{n}{i} $ on $ I_{n,i}, i = 0,1,...,n $ and $ 0$ elsewhere. Hence $f_n$ is nonnegative. We prove that $f_n $ is even function by induction on $ n $. For $ n=0 $, $ f_0 $ is even. Suppose that $ f_n $ is even for some $n \in \mathbb{N}_0 \cup \{0\}$. Then
  $$f_{n+1}(-x)  = f_n(-x+1/2) + f_n(-x-1/2)= f_n(x-1/2) + f_n(x+1/2)= f_{n+1}(x) .$$
  So $ f_{n+1}$ is even. Thus $f_n$ is even for all $n \in \mathbb{N}_0 \cup \{0\}$.
  \end{proof}

 Unlike integral operators, shift operators do not increase the order of smoothness of the functions they operate on. For shift operators $E^ {\frac{1}{2}} $ and  $E^ {-\frac{1}{2}} $, the jump discontinuities are shifted half units to the left and the right respectively. The operator $\left( E^ {\frac{1}{2}} + E^ {-\frac{1}{2}}\right)$ when operated on $f_n$ to yield $f_{n+1}$, expands the support of $f_n$ by one unit and adds the number of jump discontinuities by one. For example, the jump discontinuities of $f_0 $ are the points $x_{0,0}=-1/2 $ and $x_{0,1}= 1/2 $. That of $f_1$ are $x_{1,0}=-1 $,  $x_{1,1}= 0 $, and $x_{1,2}= 0$. That of $f_2$ are $x_{2,0}= -3/2 $, $x_{2,1}= -1/2 $, $x_{2,2}= 1/2$, and $x_{2,3}= 3/2 $. We generalize the form of the jump discontinuities of $ f_n $ in the next theorem.
 \begin{theorem}[\textbf{Jump discontinuities of $f_n$}]
  For $ n\in \mathbb{N} \cup \{0\}$, the points $x_{n,i}= i-\frac{n+1}{2}, i=0, 1,2,...,n+1 $, are the points of jump discontinuities of $f_n$.
 \end{theorem}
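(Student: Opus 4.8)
The plan is to read everything off the closed form (\ref{eq:fnwithbinom}),
\[
  f_n(x)= \sum_{i=0}^{n} \binom{n}{i}\,\chi_{I_{n,i}}(x),\qquad
  I_{n,i}= \Bigl(i-\tfrac{n+1}{2}\,,\; i-\tfrac{n+1}{2}+1\Bigr),
\]
and to locate the discontinuities of $f_n$ by comparing, at each point, the value of $f_n$ with its two one-sided limits. The structural fact I would use is that the $n+1$ open unit intervals $I_{n,0},\dots,I_{n,n}$ are consecutive and pairwise disjoint, abutting at exactly the $n+2$ points $x_{n,i}=i-\frac{n+1}{2}$, $i=0,1,\dots,n+1$, and that $\bigcup_{i=0}^{n}\overline{I_{n,i}}=\bigl[-\tfrac{n+1}{2},\tfrac{n+1}{2}\bigr]=\supp(f_n)$ by the support theorem above.

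First I would dispose of the points \emph{not} lying in $\{x_{n,i}\}$: such an $x$ lies either in exactly one open interval $I_{n,i}$, where $f_n$ equals the constant $\binom{n}{i}$, or outside $[-\tfrac{n+1}{2},\tfrac{n+1}{2}]$, where $f_n\equiv 0$. In either case $f_n$ is constant on a neighbourhood of $x$, hence continuous there; so every discontinuity of $f_n$ is among the $x_{n,i}$.

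Next I would treat the points $x_{n,i}$ themselves, adopting the convention $\binom{n}{-1}=\binom{n}{n+1}=0$ (consistent with (\ref{eq:binomialdefin})). Since $x_{n,i}$ is simultaneously the right endpoint of $I_{n,i-1}$ and the left endpoint of $I_{n,i}$ (one of these intervals being absent when $i=0$ or $i=n+1$), the one-sided limits are
\[
  \lim_{x\to x_{n,i}^{-}} f_n(x)=\binom{n}{i-1},\qquad
  \lim_{x\to x_{n,i}^{+}} f_n(x)=\binom{n}{i},
\]
while $x_{n,i}$ lies in no open interval $I_{n,j}$, so $f_n(x_{n,i})=0$. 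For $1\le i\le n$ both limits are positive integers $\ge 1$, and for $i\in\{0,n+1\}$ exactly one of the two limits equals $1$; in every case $f_n(x_{n,i})=0$ disagrees with at least one one-sided limit, so $f_n$ is discontinuous at each $x_{n,i}$. Together with the previous step this shows that $\{x_{n,i}:0\le i\le n+1\}$ is precisely the set of points of discontinuity of $f_n$.

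I expect the only real delicacy to be terminological rather than computational. At an interior point $x_{n,i}$ ($1\le i\le n$) the discontinuity is a genuine \emph{jump} precisely when $\binom{n}{i-1}\ne\binom{n}{i}$; by the symmetry of Pascal's triangle this fails only when $i-1=n-i$, i.e. $i=\tfrac{n+1}{2}$, which is an integer only for odd $n$, and then only at the centre $x_{n,(n+1)/2}=0$, where the two one-sided limits coincide (both equal $\binom{n}{(n+1)/2}$) yet still differ from $f_n(0)=0$, making the discontinuity removable rather than a proper jump. I would therefore either state the conclusion as "points of discontinuity" or explicitly flag this single exceptional case. An alternative proof by induction on $n$ is available — the discontinuity set of $(E^{1/2}+E^{-1/2})f_n$ is contained in the union of the discontinuity set of $f_n$ translated by $+\tfrac12$ and by $-\tfrac12$, which one checks equals $\{x_{n+1,i}:0\le i\le n+2\}$, with no cancellation of jumps — but it requires the same endpoint bookkeeping, so the direct argument above is the shorter route.
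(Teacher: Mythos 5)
Your proof is correct, but it takes a genuinely different route from the paper's. The paper argues by induction on $n$ through the recursion $f_{n+1}(x)=f_n(x+1/2)+f_n(x-1/2)$: it asserts that the discontinuities of $f_{n+1}$ are exactly the points $x$ for which $x\pm\tfrac12$ is a discontinuity of $f_n$, and reads off the resulting set $\{x_{n+1,i}\}$. You instead read everything directly off the closed form, partitioning $\mathbb{R}$ into the open intervals $I_{n,i}$, their common endpoints $x_{n,i}$, and the exterior, and comparing one-sided limits at each $x_{n,i}$. Your route buys something concrete: the paper's induction only gives the discontinuity set of $f_{n+1}$ as a \emph{subset} of the translates of that of $f_n$, and in fact cancellation does occur — at an interior point $x$ with $x\pm\tfrac12=x_{n,i},x_{n,i+1}$ the combined jump is $\binom{n}{i+1}-\binom{n}{i-1}$, which vanishes precisely when $i=n/2$ (so at $x=0$ for odd $n+1$). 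Your direct computation detects exactly this: for odd $n$ the two one-sided limits at $0$ agree, so the discontinuity there is removable rather than a genuine jump (both limits equal $\binom{n}{(n+1)/2}$ while $f_n(0)=0$ under the open-interval convention), a case the paper's induction silently passes over. Flagging it, as you do, or restating the theorem in terms of ``points of discontinuity,'' is the right call; otherwise the two arguments establish the same statement.
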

 \begin{proof}
   We use induction on  $n$. For $n=0 $ the hypothesis holds true as the points of jump discontinuities of $f_0$ are $ x= \pm 1/2 $. Suppose that the assumption is true for some arbitrary $ n \in \mathbb{N} \cup \{0\} $. Then the jump discontinuities of $ f_{n+1}(x)= f_n(x- 1/2) + f_n(x-1/2)$ are those points $x$ such that either $x- 1/2 $ or $x + 1/2  $ are the jump discontinuities of  $f_n$. That is, the points where $x= i-\frac{n+1}{2}$ or $x = i-\frac{n-1}{2}$ for some $i \in \{0, 1,2,...,n+1\}$. These are the points $ x_{n+1,i}= i-\frac{n+2}{2}, i=0,1,...,n+2 $. These  are the jump discontinuities of $f_{n+1}$ induced from the induction hypothesis. Thus the theorem is proved.
 \end{proof}
\subsection{The powers, and the $L^p$ norms of $f_n$}
\begin{definition}[\textbf{The Lebesgue space $ L^p(\mathbb{R})$}]
  Let $1\leq p < \infty $. The function space, $ \{f|\, f : \mathbb{R} \rightarrow \mathbb{R } \}$, of measurable  functions satisfying $\int_{\mathbb{R}} |f(x)|^p dx < \infty $ is denoted by $ L^p(\mathbb{R})$.
  For $f\in L^p(\mathbb{R})$ we define the corresponding norm as
  \begin{equation}\label{eq:normdefinitionforLp}
   \|f\|_p := \left(\int_{-\infty}^{\infty}|f(x)|^p dx\right)^{\frac{1}{p}}, \quad 1 \leq p < \infty.
\end{equation}
For $p = \infty$, then $ L^\infty (\mathbb{R})$ is the set $f : \mathbb{R} \rightarrow \mathbb{R } $ of essentially bounded measurable functions. For $f \in  L^\infty (\mathbb{R})$ the norm is defined as:
$$ \|f\|_\infty :=\ess \limits_{x\in \mathbb{R}}|f(x)| $$
\end{definition}

As $f_n$ are simple functions in standard form, the $p$-power  $ f^p_n$  of $f_n$, are calculated according to (\ref{eq:powerofsimplefunction}), are also simple functions given by

\begin{equation}\label{eq:thepthpoweroffn}
   f_n^p (x)= \sum_{i=0}^{n} \binom{n}{i}^p \chi_{I_{n,i}}(x),\, 1 \leq p < \infty,\, n \in \mathbb{N} \cup\{0\}.
\end{equation}
From (\ref{eq:normdefinitionforLp}), (\ref{eq:thepthpoweroffn}), and (\ref{eq:lebegueintegralofsimple}) we have

 \begin{equation}\label{eq:Lpnormforfn}
    \|f_n\|_p  = \left( \sum_{i=0}^{n}\binom{n}{i}^p  \right)^{\frac{1}{p}}, \quad 1 \leq p < \infty, \, n \in \mathbb{N} \cup\{0\}.
\end{equation}

In particular, for $p=1$ and $ p=2$
\begin{equation}\label{eq:L1normforfn}
    \|f_n\|_1 = \int_{-\infty}^{\infty}|f_n(x)| dx =  \sum_{i=0}^{n}\binom{n}{i} = 2^n,
\end{equation}

and
\begin{equation}\label{eq:L2normforfn}
    \|f_n\|_2 = \left(\int_{-\infty}^{\infty}|f_n(x)|^2 dx\right)^{\frac{1}{2}} = \left( \sum_{i=0}^{n}\binom{n}{i}^2  \right)^{\frac{1}{2}}=\sqrt{\binom{2n}{n}}.
\end{equation}
The mean integral of $|f_n|^2$ over the support $ [-\frac{n+1}{2},\frac{n+1}{2} ]$ of $f_n$ yields
\begin{equation}\label{eq:meansquareL2normforfn}
     \frac{1}{n+1}\int_{-\frac{n+1}{2}}^{\frac{n+1}{2}}|f_n(x)|^2 dx = \frac{1}{n+1} \int_{-\infty}^{\infty}|f_n(x)|^2 dx = \frac{1}{n+1} \sum_{i=0}^{n}\binom{n}{i}^2 = \frac{1}{n+1}\binom{2n}{n},
\end{equation}
  the \emph{Catalan numbers}. The  $ L^\infty$ norm of $f_n$ is defined depending wether $n$ is even or odd. If $ n$ is even, the $ L^\infty$ norms of $f_n$ is
 \begin{equation}\label{eq:Linfinitynormeven}
   \|f_n\|_\infty = \max \limits_{0\leq i\leq n}\binom{n}{i}= \binom{n}{\frac{n}{2}},
 \end{equation}
 whereas if $n$ is odd,
 \begin{equation}\label{eq:Linfinitynormodd}
   \|f_n\|_\infty = \max \limits_{0\leq i\leq n}\binom{n}{i}= \binom{n}{\frac{n-1}{2}}= \binom{n}{\frac{n+1}{2}}.
 \end{equation}
  For $ n\in \mathbb{N }\cup \{0\}$,  $\|f_n\|_\infty$ is  the greatest numerical coefficient appearing in the expansion of $ (1+x)^n$. That is the same as the largest number that appear on the $n$-th row of the Pascal triangle that is partly  displayed in Table 1. From the above results, we conclude that $ f_n \in L^p(\mathbb{R}), \, 1 \leq p \leq \infty,\, n \in \mathbb{N }\cup \{0\} $.

 From property (\ref{eq:intersection}), for $I_{n,i} $  given in (\ref{eq:intervalIn})
 \begin{equation}\label{eq:chiproduct}
    \chi_{ I_{n,i} \cap I_{n,j }}(x)= \chi_{ I_{n,i}}(x) \chi_{ I_{n,j}}(x)=  \delta_{i,j}\chi_{ I_{n,i}}(x) ,\quad x \in \mathbb{R}, i,j\in \{ 0,1,2,...,n\},
 \end{equation}
 where $ \delta_{i,j}$ is the Kronecker delta defined as $ \delta_{i,j}= 0, \,i\neq j $ and $ \delta_{i,i}= 1$.

 As an application of real analytic method to derive the combinatorial identity, we state and derive the Vandermounde's identity which is a well known combinatorial identity.
\begin{lemma}[\textbf{Vandermonde's identity/ convolution}]
  For any nonnegative integers $r,m,n$
  \begin{equation}\label{eq:Vandermonde}
   \binom{n+m}{r} =\sum_{k=0}^{r}\binom{m}{k}\binom{n}{r-k}.
  \end{equation}
  \end{lemma}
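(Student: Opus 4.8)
The plan is to prove Vandermonde's identity using the machinery already developed for the sequence $f_n$, specifically by computing the $L^1$-inner product $\int_{\mathbb{R}} f_m(x) f_n(x)\,dx$ in two different ways, or more precisely by relating the product structure of these simple functions to the convolution identity. The cleanest route, consistent with the paper's stated intention of using "real analytic method to derive the combinatorial identity," is to start from the polynomial identity $(1+x)^m (1+x)^n = (1+x)^{m+n}$, extract the coefficient of $x^r$ on each side, and observe that this is exactly what one obtains by multiplying the generating polynomials $B_m(x)$ and $B_n(x)$ whose coefficients are the values of $f_m$ and $f_n$ on their respective unit intervals. However, since the paper wants to emphasize the function-theoretic viewpoint, I would instead phrase it via the discrete convolution that arises when one computes how the coefficients $a_{n,i} = \binom{n}{i}$ propagate under repeated application of the shift-sum operator $E^{1/2} + E^{-1/2}$.

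Concretely, first I would note that $f_{m+n} = \left(E^{1/2} + E^{-1/2}\right)^{m+n} f_0 = \left(E^{1/2}+E^{-1/2}\right)^m \left(E^{1/2}+E^{-1/2}\right)^n f_0$, since the shift operators commute. The operator $\left(E^{1/2}+E^{-1/2}\right)^m$ acting on a function $g$ expands by the binomial theorem to $\sum_{k=0}^{m}\binom{m}{k} E^{(m-2k)/2} g$, i.e. a weighted sum of shifts of $g$ by half-integer amounts with weights $\binom{m}{k}$. Applying this with $g = f_n$ and tracking which unit interval $I_{m+n,r}$ a given term lands on, the value of $f_{m+n}$ on $I_{m+n,r}$ is $\binom{m+n}{r}$ by the closed form (\ref{eq:fnwithbinom}), while on the other hand it equals a sum $\sum_{k}\binom{m}{k}\binom{n}{r-k}$ coming from the contributions of the shifted copies of $f_n$. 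Matching the alignment of the intervals $I_{n,i}$ under the half-integer shifts is the bookkeeping heart of the argument: one checks that the shift $E^{(m-2k)/2}$ applied to $f_n$ moves the interval $I_{n,i}$ onto $I_{m+n, i+k}$, so that the coefficient of $\chi_{I_{m+n,r}}$ gets contributions precisely from pairs $(k,i)$ with $i+k = r$, i.e. $i = r-k$, giving $\sum_{k=0}^{r}\binom{m}{k}\binom{n}{r-k}$ (with the convention that binomial coefficients vanish when the lower index exceeds the upper one, so the sum may be taken up to $r$).

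The main obstacle, as usual with such index-chasing proofs, is getting the interval-shift alignment exactly right: one must verify that $E^{(m-2k)/2}$ carries $I_{n,i} = \left(-\tfrac{n+1}{2}+i,\ -\tfrac{n+1}{2}+i+1\right)$ onto $I_{m+n,i+k} = \left(-\tfrac{m+n+1}{2}+(i+k),\ -\tfrac{m+n+1}{2}+(i+k)+1\right)$, which amounts to checking the arithmetic identity $-\tfrac{n+1}{2}+i - \tfrac{m-2k}{2} = -\tfrac{m+n+1}{2} + i + k$ --- a one-line computation, but the sign of the shift and the direction convention in (\ref{eq:leftandrightshiftoprators}) must be handled carefully. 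Once this is confirmed, the rest follows by reading off and equating the coefficient of the single characteristic function $\chi_{I_{m+n,r}}(x)$ on both representations of $f_{m+n}$, using that the intervals $I_{m+n,r}$ for distinct $r$ are disjoint (hence the representation is in standard form and coefficients are unique). An alternative, and arguably even simpler, finishing move that avoids shifts altogether is to observe that $\|f_m\|_1 \cdot \|f_n\|_1 = 2^m \cdot 2^n = 2^{m+n} = \|f_{m+n}\|_1$ only recovers $\sum_r \binom{m+n}{r} = \sum_k\binom{m}{k}\sum_j\binom{n}{j}$, which is the $r$-summed version; to get the identity for each fixed $r$ one genuinely needs the pointwise coefficient comparison above rather than just the norm, so I would present the shift-operator computation as the main argument.
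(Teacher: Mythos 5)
Your proof is correct, and it takes a route that is close in spirit to the paper's but genuinely different in execution. Both arguments run on the same algebraic engine --- factoring $\left(E^{1/2}+E^{-1/2}\right)^{m+n}=\left(E^{1/2}+E^{-1/2}\right)^{m}\left(E^{1/2}+E^{-1/2}\right)^{n}$ and expanding by the binomial theorem --- but they differ in which object the operator is applied to in order to read off coefficients. The paper applies it to the convolution: it writes $f_m\ast f_n$ once as a double sum $\sum_{i,j}\binom{m}{i}\binom{n}{j}E^{\frac{m+n}{2}-i-j}(f_0\ast f_0)$ and once as $\sum_{r}\binom{m+n}{r}E^{\frac{m+n}{2}-r}(f_0\ast f_0)$, then equates coefficients of the translates of the tent function $f_0\ast f_0$. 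You instead apply the operator directly to $f_0$ to produce $f_{m+n}$, and compare the resulting expansion $\sum_{k}\binom{m}{k}E^{(m-2k)/2}f_n=\sum_r\bigl(\sum_{i+k=r}\binom{m}{k}\binom{n}{i}\bigr)\chi_{I_{m+n,r}}$ against the closed form $\sum_r\binom{m+n}{r}\chi_{I_{m+n,r}}$; your interval-alignment check $-\frac{n+1}{2}+i-\frac{m-2k}{2}=-\frac{m+n+1}{2}+i+k$ is correct, as is your observation that the $L^1$-norm identity only yields the $r$-summed version. Your variant has one concrete advantage: the intervals $I_{m+n,r}$ are pairwise disjoint, so uniqueness of the coefficients in your comparison is immediate from the standard form of a simple function, whereas the paper's step of ``comparing the coefficients of $E^{\frac{m+n}{2}-r}f_0\ast f_0$'' tacitly relies on the linear independence of the integer translates of the tent function (these translates have overlapping supports), a fact the paper uses without justification. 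The price you pay is that your argument does not produce the convolution formula (\ref{eq:Enplusm}) for $f_m\ast f_n$, which the paper reuses later in the Young's-inequality section; the paper's convolution-based proof earns that formula as a byproduct.
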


  \begin{proof}
 First, let us evaluate the  convolution of the characteristic functions of the $i$-th unit interval in the $\supp(f_m)$ and the $j$-th unit interval in $\supp(f_n)$, $  \chi_{ I_{m,i}}(x) \ast \chi_{ I_{n,j}}(x)$.
\begin{align*}
  x \in I_{m,i} & \Leftrightarrow x \in \left( -\frac{m+1}{2}+i , -\frac{m+1}{2}+ i +1 \right)\\
   & \Leftrightarrow - \frac{1}{2} < x+\frac{m}{2}-i < \frac{1}{2}
\end{align*}
Therefore
\begin{equation}\label{eq:shiftoff0bym}
   \chi_{ I_{m,i}}(x) = f_0(x+\frac{m}{2}-i) = E ^{\frac{m}{2}-i} f_0(x).
\end{equation}

By using (\ref{eq:shiftoff0bym}), the convolution of the characteristic function of two unit intervals $I_{m,i} $ and $I_{n,j} $ is calculated as
\begin{equation}\label{convolutionImIn}
   \chi_{ I_{m,i}}(x) \ast \chi_{ I_{n,j}}(x) = E ^{\frac{m}{2}-i} f_0(x) \ast E ^{\frac{n}{2}-j} f_0(x)= E ^{\frac{m+n}{2}-i-j} \left(f_0\ast f_0\right) (x).
\end{equation}
The convolution of two elements $f_m $ and $ f_n $ of the sequence yield
\begin{align}\label{eq:EnEm}
  f_m(x)\ast f_n(x) & =\left( \sum_{i=0}^{m} \binom{m}{i}\chi_{ I_{m,i}}(x)\right) \ast  \left(\sum_{j=0}^{n} \binom{n}{j}\chi_{ I_{n,j}}(x)\right) \nonumber\\
   & =  \sum_{i=0}^{m}\sum_{j=0}^{n}\binom{m}{i}\binom{n}{j} \chi_{ I_{m,i}}(x) \ast \chi_{ I_{n,j}}(x) \nonumber \\
   & =\sum_{i=0}^{m}\sum_{j=0}^{n}  \binom{m}{i}\binom{n}{j} E ^{\frac{m+n}{2}-i-j} f_0(x)\ast f_0(x).
\end{align}
On the other hand,
\begin{align}\label{eq:Enplusm}
  f_m(x)\ast f_n(x) & =\left(E ^{\frac{1}{2}}+ E ^{-\frac{1}{2}}\right)^m f_0(x) \ast \left(E ^{\frac{1}{2}}+ E ^{-\frac{1}{2}}\right)^n f_0(x) \nonumber \\
   & = \left(E ^{\frac{1}{2}}+ E ^{-\frac{1}{2}}\right)^{m+n} f_0(x)\ast f_0(x)  \nonumber \\
   & = \sum_{r=0 }^{m+n}\binom{m+n}{r}E ^{\frac{m+n-r}{2}}E ^{\frac{-r}{2}}f_0(x)\ast f_0(x) \nonumber \\
   & =\sum_{r=0 }^{m+n}\binom{m+n}{r}E ^{\frac{m+n}{2}-r} f_0(x)\ast f_0(x).
\end{align}
Comparing the coefficients of $ E ^{\frac{m+n}{2}-r} f_0(x)\ast f_0(x) $ in (\ref{eq:EnEm}) and (\ref{eq:Enplusm}),  by setting $ i+j= r $  in (\ref{eq:EnEm}), we obtain
$$\sum_{ 0 \leq i+j=r \leq m+n }\binom{m}{i}\binom{n}{j}=  \binom{m+n}{r}. $$
This sum  when re-indexed and written yields the Lemma.
\end{proof}
In the prove the Vandermonde's identity we calculating $  f_0(x)\ast f_0(x)$ as it is. However, we may need the explicit result for latter use. We can proceed as follows.
\begin{align}\label{eq:step1}
  (f_0\ast f_0)(x) & = \int_{-\infty}^{\infty}f_0(x-y)f_0(y)dy  \nonumber \\
   &= \int_{-\frac{1}{2}}^{\frac{1}{2}} f_0(x-y)dy     \nonumber  \\
   & = \int_{x-\frac{1}{2}}^{x+\frac{1}{2}} f_0(y)dy
\end{align}
Now differentiating  the result in (\ref{eq:step1})  we get 
\begin{equation}\label{eq:step2}
  (f_0\ast f_0)'(x) = f_0(x+1/2)-f_0(x-1/2)= \chi_{(-1,0)}(x)- \chi_{(0,1)}(x) 
\end{equation}
and  from (\ref{eq:step2}) we get 
\begin{equation}\label{eq:step3}
  (f_0\ast f_0)(x)= \int_{-\infty}^{x} (f_0\ast f_0)'(s)ds = \int_{-\infty}^{x}(\chi_{(-1,0)}(s)- \chi_{(0,1)}(s))ds  \end{equation}
Integrating out we get 
\begin{equation}\label{eq:step4}
 (f_0\ast f_0)(x) =\begin{cases}
0, -\infty < x \leq -1,\\
1+x, -1 < x \leq 0, \\
1-x, 0<x<1, \\
0, 1\leq x < \infty.
\end{cases}
\end{equation}


\subsection{The evaluation of the integrals of the form $ \int_{-\infty}^{\infty} f_n(x)f_m(x)dx $}

\subsubsection{The case where $ n\geq m $ and $ n-m \equiv 0 (\text{mod 2})$}
 We use Vandermonde's identity in the proof of the two theorems that follow.
 \begin{theorem}\label{eq:theormmod0}
   Let $ n \geq m $ and $n\equiv m(\text{mod 2})$. Then we have
   $$ \int_{-\infty}^{\infty} f_n(x)f_m(x)dx = \binom{n+m}{\frac{n+m}{2}}.$$
 \end{theorem}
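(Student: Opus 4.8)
The plan is to compute the integral directly from the closed form (\ref{eq:fnwithbinom}) and then collapse the resulting double sum by Vandermonde's identity (\ref{eq:Vandermonde}). First I would substitute $f_n=\sum_{i=0}^{n}\binom{n}{i}\chi_{I_{n,i}}$ and $f_m=\sum_{j=0}^{m}\binom{m}{j}\chi_{I_{m,j}}$ into $\int_{\mathbb{R}}f_nf_m$ and use (\ref{eq:intersection}) to replace $\chi_{I_{n,i}}\chi_{I_{m,j}}$ by $\chi_{I_{n,i}\cap I_{m,j}}$, obtaining
\[
\int_{-\infty}^{\infty} f_n(x) f_m(x)\,dx = \sum_{i=0}^{n}\sum_{j=0}^{m}\binom{n}{i}\binom{m}{j}\,\lvert I_{n,i}\cap I_{m,j}\rvert .
\]

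The key observation is purely geometric: $I_{n,i}$ and $I_{m,j}$ are open unit intervals whose left endpoints differ by $\bigl(i-\tfrac{n+1}{2}\bigr)-\bigl(j-\tfrac{m+1}{2}\bigr)=i-j-\tfrac{n-m}{2}$, which is an \emph{integer} exactly because $n\equiv m\ (\mathrm{mod}\ 2)$. Two unit intervals whose endpoints differ by an integer are either identical or meet in a null set, so $\lvert I_{n,i}\cap I_{m,j}\rvert$ equals $1$ when $j=i-\tfrac{n-m}{2}$ and $0$ otherwise; adopting the convention that binomial coefficients with out-of-range lower index vanish then absorbs the boundary cases. This collapses the double sum to the single sum $\sum_{i}\binom{n}{i}\binom{m}{i-\frac{n-m}{2}}$.

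Finally I would re-index by $\ell=i-\tfrac{n-m}{2}$ and apply the symmetry $\binom{n}{\ell+\frac{n-m}{2}}=\binom{n}{\,n-\ell-\frac{n-m}{2}}=\binom{n}{\frac{n+m}{2}-\ell}$ to bring the sum into the exact shape of Vandermonde's identity (\ref{eq:Vandermonde}) with parameters $m$, $n$ and $r=\tfrac{n+m}{2}$, giving $\sum_{\ell}\binom{m}{\ell}\binom{n}{\frac{n+m}{2}-\ell}=\binom{n+m}{\frac{n+m}{2}}$, which is the assertion of the theorem.

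I expect the only real friction to be bookkeeping: making the index range of the surviving single sum explicit and verifying that the summation limits in Vandermonde's identity are respected once the out-of-range coefficients are set to zero. (As an alternative route, since $f_m$ is even one has $\int f_nf_m=(f_n\ast f_m)(0)$, and one can instead evaluate (\ref{eq:Enplusm}) at $x=0$ using the explicit triangular bump (\ref{eq:step4}); because $\tfrac{m+n}{2}$ is an integer, only the term $r=\tfrac{n+m}{2}$ survives, again producing $\binom{n+m}{\frac{n+m}{2}}$.)
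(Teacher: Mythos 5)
Your proposal is correct and follows essentially the same route as the paper: both arguments rest on the observation that for $n\equiv m\ (\mathrm{mod}\ 2)$ the unit intervals $I_{n,i}$ and $I_{m,j}$ either coincide or are disjoint (the paper phrases this as the middle $m+1$ intervals of $\supp(f_n)$ matching those of $\supp(f_m)$, you phrase it via the integer difference of left endpoints), and both then collapse the integral to $\sum_{i=0}^{m}\binom{m}{i}\binom{n}{\frac{n-m}{2}+i}$ and finish with the symmetry $\binom{n}{k}=\binom{n}{n-k}$ followed by Vandermonde's identity. The computation checks out, including the handling of out-of-range indices.
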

\begin{proof}
  Since $\supp(f_n)= \left[- \frac{n+1}{2}, \frac{n+1}{2}\right]$ has $ n+1$ unit intervals and $\supp(f_m)= \left[- \frac{m+1}{2}, \frac{m+1}{2}\right]$ has $ m+1$ unit intervals. According to an assumption in the theorem, $ (n+1)-(m+1)= n-m $ is even. The middle $ m+1$ unit intervals in the $ \supp (f_n)$ coincide with the $m+1$ unit intervals of $ \supp(f_m)$. We will observe these with the following steps.
  The first $ \frac{n-m}{2}$ unit intervals in $\supp(f_n) $ are
  $$ I_{n,0},\, I_{n,1},\,...,\,I_{n,\frac{n-m-2}{2}}   . $$
  The next $ m+1$ unit intervals in the $ \supp(f_n)$ are
   $$ I_{n,\frac{n-m}{2}},\,  I_{n,\frac{n-m+2}{2}},\,....,I_{n,\frac{n+m}{2}}  . $$
    The last  $ \frac{n-m}{2}$ unit intervals in the $ \supp(f_n)$ are
   $$ I_{n,\frac{n+m+2}{2}},\,  I_{n,\frac{n+m+4}{2}},\,....,I_{n,n}  . $$
   We have,
   $$I_{n,\frac{n-m}{2}}=\left(-\frac{n+1}{2}+\frac{n-m}{2}, -\frac{n+1}{2}+\frac{n-m}{2}+1 \right)=\left(-\frac{m+1}{2}, -\frac{m+1}{2}+1 \right)= I_{m,0} .$$
   Consequently,
   $$I_{n,\frac{n-m}{2}}= I_{m,0},\quad  I_{n,\frac{n-m+2}{2}}= I_{m,1},\quad I_{n,\frac{n-m+4}{2}}= I_{m,2},...,
    I_{n,\frac{n-m+2m}{2}}= I_{n,\frac{n+m}{2}}= I_{m,m}$$

    Therefore the middle $m+1$ unit intervals in $\supp (f_n)$ coincides with that of the unit intervals in $ \supp(f_m)$. Therefore the product $f_n f_m $ is a simple function given by
   \begin{equation}\label{eq:productfmfn}
   f_n(x)f_m(x) = \sum_{i=0}^{m} \binom{m}{i} \binom{n}{\frac{n-m+2i}{2}}\chi_{I_{m,i}}(x).
   \end{equation}

Now integrating the product $ f_n(x)f_m(x)$  that is given in (\ref{eq:productfmfn}) and using Vandermonde's identity and using the fact that $\binom{m}{i}=0, i > m $ and that $ n\geq \frac{n+m}{2} \geq m $, we get
  $$\int_{-\infty}^{\infty}f_n(x)f_m(x)dx = \sum_{i=0}^{m}\binom{n}{\frac{n-m}{2}+i}\binom{m}{i}= \sum_{i=0}^{m}\binom{n}{\frac{n+m}{2}-i}\binom{m}{i}= \binom{n+m}{\frac{n+m}{2}}.$$
\end{proof}
\subsubsection{The case where $ n > m $ and $ n-m \equiv 1 (\text{mod 2})$}
\begin{theorem}\label{eq:theoremmod1}
   Let $ n \geq m $ and $n-m\equiv 1(\text{mod 2})$. Then we have
   $$ \int_{-\infty}^{\infty} f_n(x)f_m(x)dx = \frac{1}{2}\binom{n+m+1}{\frac{n+m+1}{2}}.$$
 \end{theorem}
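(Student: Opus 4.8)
The plan is to follow the template of the proof of Theorem~\ref{eq:theormmod0}. The new feature when $n-m$ is odd is that the unit intervals $I_{m,j}$ of $\supp(f_m)$ no longer coincide with unit intervals of $\supp(f_n)$; instead each is shifted by a half-integer relative to the grid $\{I_{n,i}\}$. Concretely, setting $k:=\tfrac{n-m-1}{2}\in\mathbb{N}\cup\{0\}$ (an integer since $n-m$ is odd, and nonnegative since the hypothesis forces $n>m$), I would first verify from the definition (\ref{eq:intervalIn}) that the left endpoint of $I_{m,j}$ is exactly the midpoint of $I_{n,k+j}$. Hence $I_{m,j}$ covers the right half of $I_{n,k+j}$ and the left half of $I_{n,k+j+1}$, each a set of measure $\tfrac12$, and for every $j\in\{0,\dots,m\}$ both indices $k+j$ and $k+j+1$ lie in $\{0,\dots,n\}$, so every piece is genuinely in $\supp(f_n)$. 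Using the closed form (\ref{eq:fnwithbinom}) for the values of $f_n$ and $f_m$ on these pieces, $f_nf_m$ is a simple function whose integral is
\begin{equation*}
\int_{-\infty}^{\infty}f_n(x)f_m(x)\,dx=\frac{1}{2}\sum_{j=0}^{m}\binom{m}{j}\binom{n}{k+j}+\frac{1}{2}\sum_{j=0}^{m}\binom{m}{j}\binom{n}{k+j+1}.
\end{equation*}

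Next I would evaluate the two sums. Replacing $\binom{m}{j}$ by $\binom{m}{m-j}$ and substituting $i=m-j$ turns the first sum into $\sum_{i=0}^{m}\binom{m}{i}\binom{n}{\frac{n+m-1}{2}-i}$ and the second into $\sum_{i=0}^{m}\binom{m}{i}\binom{n}{\frac{n+m+1}{2}-i}$. Vandermonde's identity (\ref{eq:Vandermonde}) applies to each: extending the summation range to $i\le\frac{n+m-1}{2}$ (resp.\ $i\le\frac{n+m+1}{2}$) is harmless because the extra terms carry the vanishing factor $\binom{m}{i}$, so the first sum equals $\binom{n+m}{\frac{n+m-1}{2}}$ and the second equals $\binom{n+m}{\frac{n+m+1}{2}}$. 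Finally Pascal's identity (\ref{eq:binomialsum}), in the form $\binom{n+m}{\frac{n+m-1}{2}}+\binom{n+m}{\frac{n+m+1}{2}}=\binom{n+m+1}{\frac{n+m+1}{2}}$, yields the asserted value $\tfrac12\binom{n+m+1}{\frac{n+m+1}{2}}$.

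The only real obstacle is the interval bookkeeping of the first paragraph: pinning down the shift $k$, checking that the two overlapping half-intervals carry precisely the values $\binom{n}{k+j}$ and $\binom{n}{k+j+1}$, and confirming the index ranges so that the Vandermonde sums are the genuine ones. A shortcut that bypasses all of this: since $f_m$ is even, $\int f_n f_m\,dx=(f_n\ast f_m)(0)$; inserting the expansion (\ref{eq:Enplusm}) and evaluating at $x=0$ reduces the integral to $\sum_{r}\binom{n+m}{r}(f_0\ast f_0)\!\left(\tfrac{n+m}{2}-r\right)$, and since $n+m$ is odd the arguments are half-integers, where by (\ref{eq:step4}) the triangle function $f_0\ast f_0$ equals $\tfrac12$ at $\pm\tfrac12$ and $0$ elsewhere; only $r=\frac{n+m-1}{2}$ and $r=\frac{n+m+1}{2}$ survive, and Pascal's identity again finishes the proof. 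I would present the direct argument for parallelism with Theorem~\ref{eq:theormmod0} and record the convolution shortcut as a remark.
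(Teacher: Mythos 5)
Your main argument is essentially the paper's own proof: the same decomposition of $\supp(f_m)$ into half-unit intervals sitting inside the $I_{n,i}$, the same simple-function formula for $f_nf_m$ (your $\tfrac12\sum_j\binom{m}{j}\binom{n}{k+j}+\tfrac12\sum_j\binom{m}{j}\binom{n}{k+j+1}$ with $k=\tfrac{n-m-1}{2}$ is identical to (\ref{eq:simplefunction2})), and the same combination of Vandermonde and Pascal, merely applied in the opposite order (the paper uses Pascal first to form $\binom{n+1}{\cdot}$ and then Vandermonde once; you apply Vandermonde to each sum and finish with Pascal). The convolution shortcut you record as a remark --- evaluating $(f_n\ast f_m)(0)$ via (\ref{eq:Enplusm}) and the triangle function (\ref{eq:step4}) --- is correct and does not appear in the paper; it neatly sidesteps the interval bookkeeping and would make a worthwhile addition.
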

 \begin{proof}

 In this case, no unit interval $ I_{m,i}$ in $\supp(f_m)$ fit onto a unit interval $ I_{n,j}$ in $\supp(f_n)$. If we assume that $\left( -\frac{m+1}{2}+i, -\frac{m+1}{2}+i+1 \right)= \left( -\frac{n+1}{2}+j, -\frac{n+i}{2}+j+1 \right)$  for some $ i\in \{0,1,2,...,m\}$ and  for some $ j \in \{0,1,2,...,n\}$, then
$ n-m = 2(j-i)$. This contradicts the assumption that $ n-m \equiv  1(\text{mod 2})$. Define the half-unit intervals in $\supp(f_n)$ as
\begin{equation}\label{eq:halfunitintervalsoffn}
   J_{n,i}:= \left( -\frac{n+1}{2}+\frac{i}{2},  -\frac{n+1}{2}+\frac{i+1}{2} \right),\quad i=0,1,2,...,2n+1, n\in \mathbb{N}.
\end{equation}

We divide the $2n+2$ half unit intervals in  $\supp(f_n)$  given in (\ref{eq:halfunitintervalsoffn}) into three classes: The first $n-m$, the next $ 2m+2 $ and the last $n-m$ half-unit intervals. Note that $ (n-m) + (2m+2) + (n-m) = 2n+2$. The first $ n-m $  half-unit intervals in $\supp(f_n) $ are
  $$ J_{n,0},\, J_{n,1},\,...,\,J_{n,n-m-1}. $$
The next $ 2m+2$ half-unit intervals in the $ \supp(f_n)$ are
   $$ J_{n,n-m},\, J_{n,n-m +1},\,...,\,J_{n, n + m +1}. $$
The last  $ n-m $ half-unit intervals in the $ \supp(f_n)$ are
  $$ J_{n,n+m+2},\, J_{n,n+m +3},\,...,\,J_{n,2n+1}. $$
We claim that the middle $2m+2$ half-unit intervals of $\supp(f_n)$ coincides with the $2m+2$ half-unit intervals in $\supp(f_m)$. In fact, according to (\ref{eq:halfunitintervalsoffn}) we have

$$J_{n,n-m}= \left( -\frac{n+1}{2} +\frac{n-m}{2},  -\frac{n+1}{2}+ \frac{n-m}{2}+\frac{1}{2} \right)=  \left( -\frac{m+1}{2},  -\frac{m+1}{2}+ \frac{1}{2} \right)= J_{m,0} .$$
Consequently,

\begin{align*}
     J_{n,n-m}= J_{m,0}\subset I_{m,0}, && J_{n,n-m}&\subset I_{n,\frac{n-m+1}{2}}, \\
     J_{n,n-m+1}= J_{m,1}\subset I_{m,0},&& J_{n,n-m+1}&\subset I_{n,\frac{n-m +3}{2}}, \\
     J_{n,n-m+2}= J_{m,2}\subset I_{m,1},&& J_{n,n-m+2}&\subset I_{n,\frac{n-m +3}{2}},\\
     J_{n,n-m+3}= J_{m,3}\subset I_{m,1},&& J_{n,n-m+3}&\subset I_{n,\frac{n-m +5}{2}},\\
     ................\quad ............... && \quad ..................   \\
     J_{n,n+m}= J_{m,2m} \subset I_{m,m},&& J_{n,n+m}& \subset I_{n,\frac{n+m -1}{2}},\\
     J_{n,n+m+1}= J_{m,2m+1} \subset I_{m,m}. &&J_{n,n+m+1}& \subset I_{n,\frac{n+m +1}{2}}.
    \end{align*}
$f_m$ has values $\binom{m}{j},\, j=0, 1,2,...,m$ on the unit intervals $ I_{m,j}$, that are defined in (\ref{eq:intervalIn}). On the other hand, $ f_n$ has no uniform value on such unit intervals. For example, consider the first unit interval $ I_{m,0}= \left( -\frac{m+1}{2},  -\frac{m+1}{2}+ 1 \right) $. In this unit interval, $f_n$ has value $ \binom{n}{\frac{n-m-1}{2}}$ in the first half unit interval $\left( -\frac{m+1}{2},  -\frac{m+1}{2}+ \frac{1}{2} \right) $  and value $ \binom{n}{\frac{n-m+1}{2}} $ in the remaining half-unit interval $\left( -\frac{m+1}{2}+ \frac{1}{2}, -\frac{m+1}{2}+ 1 \right) $. Therefore the product $f_n(x)f_m(x) $ is a simple function given by
\begin{equation}\label{eq:simplefunction2}
   f_n(x)f_m(x) = \sum_{j=0}^{m} \binom{m}{j} \binom{n}{\frac{n-m+2j-1}{2}}\chi_{J_{m,2j}}(x)+ \sum_{j=0}^{m} \binom{m}{j} \binom{n}{\frac{n-m+2j+1}{2}}\chi_{J_{m,2j+1}}(x).
\end{equation}

Integrating the simple function $f_n(x)f_m(x)$ given in (\ref{eq:simplefunction2}), using Vandermonde's identity and the identity given in (\ref{eq:binomialsum}), we get
\begin{align*}
  \int_{-\infty}^{\infty}f_n(x)f_m(x)dx & = \frac{1}{2} \sum_{j=0}^{m}  \binom{m}{j} \left[ \binom{n}{\frac{n-m+2j-1}{2}}+\binom{n}{\frac{n-m+2j+1}{2}} \right] \\
   & = \frac{1}{2} \sum_{j=0}^{m}  \binom{m}{j}\binom{n+1}{\frac{n-m+2j+1}{2}}\\
   &= \frac{1}{2}\binom{n+m+1}{\frac{n+m+1}{2}}.
\end{align*}
\end{proof}

\section{ Formulation of some combinatorial inequalities by using some real analytic results}
  In this section, we apply some known results from real analysis on the sequences $f_n $  to formulate new combinatorial inequalities. Specifically we apply H\"{o}lder's inequality, Young's inequality, and Minkowiski's inequality from real analysis.
 \subsection{Application of Young's inequality in the formulation of some combinatorial inequalities}
\begin{lemma}[\textbf{H\"{o}lder's inequality}]
  For $ 1 < p < \infty $, let $f \in L^p(\mathbb{R}) $ and $g \in L^q(\mathbb{R}) $, where $\frac{1}{p}+ \frac{1}{q}= 1$. Then $fg \in L^1(\mathbb{R}) $ and $\|fg\|_1 \leq \|f\|_p \|g\|_q $.
\end{lemma}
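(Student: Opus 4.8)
The plan is to deduce H\"{o}lder's inequality from the elementary (scalar) Young inequality together with a normalisation argument. First I would dispose of the degenerate cases. Since $f\in L^p(\mathbb{R})$ and $g\in L^q(\mathbb{R})$, both $\|f\|_p$ and $\|g\|_q$ are finite; and if either of them equals $0$, then the corresponding function vanishes almost everywhere, so that $fg=0$ a.e.\ and the asserted inequality is trivial. Hence it suffices to treat the case $0<\|f\|_p,\|g\|_q<\infty$.

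The one genuinely analytic ingredient is the pointwise inequality
\begin{equation*}
  ab \le \frac{a^p}{p}+\frac{b^q}{q},\qquad a,b\ge 0,\ \tfrac1p+\tfrac1q=1,
\end{equation*}
which I would prove by concavity of the logarithm: for $a,b>0$ one writes $ab=\exp\!\left(\tfrac1p\ln a^p+\tfrac1q\ln b^q\right)$ and uses the concavity of $\ln$ with the weights $\tfrac1p,\tfrac1q$ (which sum to $1$) to get $\tfrac1p\ln a^p+\tfrac1q\ln b^q\le \ln\!\left(\tfrac1p a^p+\tfrac1q b^q\right)$; exponentiating gives the claim, and the cases $a=0$ or $b=0$ are immediate.

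Next I would normalise: put $F=f/\|f\|_p$ and $G=g/\|g\|_q$, so that $\|F\|_p=\|G\|_q=1$. The function $FG$ is measurable, being a product of measurable functions, and applying the scalar Young inequality with $a=|F(x)|$ and $b=|G(x)|$ gives
\begin{equation*}
  |F(x)G(x)| \le \frac{|F(x)|^p}{p}+\frac{|G(x)|^q}{q}\qquad\text{for every }x\in\mathbb{R}.
\end{equation*}
The right-hand side is integrable (it is a linear combination of $|F|^p$ and $|G|^q$, both in $L^1(\mathbb{R})$), hence $FG\in L^1(\mathbb{R})$, and integrating the inequality over $\mathbb{R}$ and using monotonicity of the Lebesgue integral yields
\begin{equation*}
  \int_{\mathbb{R}}|F(x)G(x)|\,dx \le \frac1p\int_{\mathbb{R}}|F|^p+\frac1q\int_{\mathbb{R}}|G|^q = \frac1p\|F\|_p^p+\frac1q\|G\|_q^q = \frac1p+\frac1q = 1 .
\end{equation*}

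Finally I would undo the normalisation: multiplying the last inequality through by $\|f\|_p\|g\|_q$ gives $\int_{\mathbb{R}}|fg|\le \|f\|_p\|g\|_q<\infty$, so that $fg\in L^1(\mathbb{R})$ and $\|fg\|_1\le\|f\|_p\|g\|_q$, which is the lemma. I do not anticipate a real obstacle here: the argument is classical and the only step that is not pure bookkeeping is the scalar Young inequality, which is just a restatement of the convexity of the exponential (equivalently, the concavity of the logarithm); the rest consists of the normalisation and monotonicity of the integral.
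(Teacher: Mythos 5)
Your proof is correct: the reduction to the normalised case, the scalar Young inequality $ab\le a^p/p+b^q/q$ via concavity of the logarithm, and the integration step are all sound, and together they give both $fg\in L^1(\mathbb{R})$ and $\|fg\|_1\le\|f\|_p\|g\|_q$. The paper itself states H\"{o}lder's inequality as a known result from real analysis and supplies no proof, so there is nothing to compare against; your argument is the standard textbook derivation and would serve as a complete proof of the lemma.
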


\begin{theorem}
 For $ 1 < p < \infty $, let $\frac{1}{p}+ \frac{1}{q}= 1$, and $ 0 \leq n, m $ are integers. We have the following combinatorial inequalities:
\begin{equation}\label{eq:Holder1}
   \binom{n+m}{\frac{n+m}{2}} \leq \left(\sum_{i=0}^{m} \binom{m}{i}^p \right)^{\frac{1}{p}}  \left(\sum_{j=0}^{n}  \binom{n}{j}^q \right)^{\frac{1}{q}}, \quad n=m(\text{mod 2}),
\end{equation}

\begin{equation}\label{eq:Holder2}
   \frac{1}{2}\binom{n+m+1}{\frac{n+m+1}{2}} \leq  \left( \sum_{i=0}^{m} \binom{m}{i}^p  \right)^{\frac{1}{p}}  \left(\sum_{j=0}^{n}  \binom{n}{j}^q \right)^{\frac{1}{q}}, \quad n-m \equiv 1(\text{mod 2} )
\end{equation}
\end{theorem}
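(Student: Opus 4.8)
The plan is to apply H\"older's inequality directly to the pair $f_m, f_n$ and then recognize both sides in combinatorial terms using results already established in the excerpt. First I would note that since $f_n \in L^p(\mathbb{R})$ and $f_m \in L^q(\mathbb{R})$ for all exponents in $[1,\infty]$ (shown in the subsection on $L^p$ norms), H\"older's inequality gives $\|f_m f_n\|_1 \le \|f_m\|_p \|f_n\|_q$. The right-hand side is immediate from formula (\ref{eq:Lpnormforfn}): $\|f_m\|_p = \left(\sum_{i=0}^{m}\binom{m}{i}^p\right)^{1/p}$ and $\|f_n\|_q = \left(\sum_{j=0}^{n}\binom{n}{j}^q\right)^{1/q}$. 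So the only remaining task is to identify $\|f_m f_n\|_1 = \int_{-\infty}^{\infty} |f_m(x)f_n(x)|\,dx$.

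Next I would observe that since both $f_m$ and $f_n$ are non-negative (the evenness/non-negativity theorem), we have $|f_m f_n| = f_m f_n$, so $\|f_m f_n\|_1 = \int_{-\infty}^{\infty} f_n(x)f_m(x)\,dx$. Now I split into the two parity cases exactly as in the hypotheses. When $n \equiv m \pmod 2$, Theorem~\ref{eq:theormmod0} evaluates this integral as $\binom{n+m}{(n+m)/2}$, which yields (\ref{eq:Holder1}) after substituting into the H\"older bound. When $n - m \equiv 1 \pmod 2$, Theorem~\ref{eq:theoremmod1} gives $\frac{1}{2}\binom{n+m+1}{(n+m+1)/2}$, yielding (\ref{eq:Holder2}). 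Both inequalities follow by simply chaining these equalities with the H\"older estimate.

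One small point to address for completeness: H\"older's inequality as stated in the lemma requires $1 < p < \infty$ (and correspondingly $1 < q < \infty$), which matches the theorem's hypothesis, so there is no boundary issue to worry about; the functions are bounded with compact support, hence in every $L^p$, so the hypotheses of the lemma are satisfied without further checking. I would also remark that the cases $m=0$ or $n=0$ are included and give no degeneracy, since $f_0$ is a genuine nonzero element of every $L^p$.

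Honestly, there is no real obstacle here — the theorem is essentially a transcription: the ``hard work'' (evaluating $\int f_n f_m$ via the Vandermonde convolution) was already done in Theorems~\ref{eq:theormmod0} and~\ref{eq:theoremmod1}, and the norm computations were done in (\ref{eq:Lpnormforfn}). The only thing requiring a moment's care is making sure the parity hypothesis in each displayed inequality is matched to the correct evaluation theorem and that non-negativity is invoked to drop the absolute values. The proof is therefore short: state H\"older, substitute the two norm formulas, substitute the relevant integral evaluation, done.
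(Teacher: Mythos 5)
Your proposal is correct and follows essentially the same route as the paper: apply H\"older's inequality to the pair $f_m, f_n$, read off the right-hand side from (\ref{eq:Lpnormforfn}), and identify $\int f_n f_m$ via Theorem \ref{eq:theormmod0} and Theorem \ref{eq:theoremmod1} according to the parity of $n-m$. Your write-up is in fact more careful than the paper's one-line proof, since you explicitly invoke non-negativity to drop the absolute values and note that the integrability hypotheses are satisfied.
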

\begin{proof}
  Proof of inequalities given in (\ref{eq:Holder1}) and (\ref{eq:Holder2}) are obtained by applying Holder inequality to $f_m $ and $f_m$ and by using the results of Theorem \ref{eq:theormmod0}  and Theorem \ref{eq:theoremmod1}.
\end{proof}
\begin{corollary}
  \begin{equation}\label{eq:Schiwarz1}
   \binom{n+m}{\frac{n+m}{2}}^2  \leq  \binom{2m}{m}  \binom{2n}{n},\quad n=m(\text{mod 2}).
\end{equation}

\begin{equation}\label{eq:Schiwarz2}
   \frac{1}{4}\binom{n+m+1}{\frac{n+m+1}{2}}^2 \leq    \binom{2m}{m}  \binom{2n}{n}, \quad n-m \equiv 1(\text{mod 2})
\end{equation}
\end{corollary}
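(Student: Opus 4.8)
The plan is to specialize the two Hölder inequalities (\ref{eq:Holder1}) and (\ref{eq:Holder2}) to the conjugate pair $p=q=2$, which is admissible since $\frac12+\frac12=1$, and then square both sides. First I would apply the preceding Theorem with $p=q=2$: in the case $n\equiv m(\text{mod }2)$ this yields
$$\binom{n+m}{\frac{n+m}{2}} \leq \left(\sum_{i=0}^{m}\binom{m}{i}^2\right)^{\frac12}\left(\sum_{j=0}^{n}\binom{n}{j}^2\right)^{\frac12},$$
and in the case $n-m\equiv 1(\text{mod }2)$ the same substitution gives the analogous bound with the left-hand side replaced by $\frac12\binom{n+m+1}{\frac{n+m+1}{2}}$.

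Next I would invoke the identity (\ref{eq:L2normforfn}), namely $\sum_{i=0}^{m}\binom{m}{i}^2=\binom{2m}{m}$ and $\sum_{j=0}^{n}\binom{n}{j}^2=\binom{2n}{n}$ (itself a consequence of the Vandermonde convolution established earlier), to rewrite the two right-hand factors as $\sqrt{\binom{2m}{m}}$ and $\sqrt{\binom{2n}{n}}$. Since every quantity appearing is nonnegative, squaring both sides of each inequality is legitimate and preserves its direction, which produces (\ref{eq:Schiwarz1}) and (\ref{eq:Schiwarz2}) respectively.

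Equivalently, and perhaps more transparently, one can bypass the general Hölder statement and argue directly with Cauchy--Schwarz: applying $\|f_m f_n\|_1\leq\|f_m\|_2\|f_n\|_2$ to the simple functions $f_m,f_n\in L^2(\mathbb{R})$ evaluates the left side via Theorem~\ref{eq:theormmod0} (parity-even) or Theorem~\ref{eq:theoremmod1} (parity-odd) and the right side via (\ref{eq:L2normforfn}); squaring then finishes the argument. There is essentially no obstacle here, as the whole proof is a one-line substitution; the only point meriting a moment's care is that squaring is monotone precisely because both members are $\geq 0$, which holds since binomial coefficients and $L^p$ norms are nonnegative.
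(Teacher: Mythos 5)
Your proposal is correct and matches the paper's own proof, which likewise sets $p=q=2$ in (\ref{eq:Holder1}) and (\ref{eq:Holder2}) and invokes (\ref{eq:L2normforfn}) before squaring. The direct Cauchy--Schwarz phrasing you add is just a restatement of the same argument, so there is nothing substantively different here.
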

\begin{proof}
  The proofs of the inequalities in (\ref{eq:Schiwarz1}) and (\ref{eq:Schiwarz2}) follow from (\ref{eq:Holder1}) and (\ref{eq:Holder2}) by setting $ p = q = 2$ and using (\ref{eq:L2normforfn}).
\end{proof}

\subsection{Application of Young's inequality in the formulation of some combinatorial inequalities}
\begin{lemma}[\textbf{Young's inequality}]
  Assume that $1\leq p,q,r \leq \infty $ satisfies
  \begin{equation}\label{eq:pqrcondition}
    \frac{1}{r}=\frac{1}{p}+\frac{1}{q}-1.
  \end{equation}
  Let $ f\in L^p(\mathbb{R})$ and $ g\in L^q(\mathbb{R})$ then $ f \ast g \in L^r(\mathbb{R})$ and $\|f\ast g\|_r \leq \|f\|_p\|g\|_q$.
\end{lemma}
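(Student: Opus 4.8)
The plan is to give a self-contained proof resting only on H\"{o}lder's inequality (Lemma above, iterated to its three-factor form) and the Tonelli--Fubini theorem, after first disposing of the boundary exponents. If $p=1$, then \eqref{eq:pqrcondition} forces $q=r$, and $\|f\ast g\|_r\le\|f\|_1\|g\|_r$ is the classical Minkowski integral inequality applied to $x\mapsto\int f(y)g(x-y)\,dy$; the case $q=1$ is symmetric. If $r=\infty$, then $\tfrac1p+\tfrac1q=1$ and, for every $x$, the bound $|(f\ast g)(x)|\le\|f\|_p\|g\|_q$ is an immediate application of H\"{o}lder's inequality to the translates $y\mapsto f(x-y)$ and $y\mapsto g(y)$, which also shows $f\ast g$ is defined everywhere.

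For the main case $1<p,q,r<\infty$ I would first work with $|f|,|g|$ in place of $f,g$ so as to establish convergence of the convolution integral. From \eqref{eq:pqrcondition} one has $r\ge p$ and $r\ge q$ (since $\tfrac1q\le1$ and $\tfrac1p\le1$), so the pointwise factorization, for fixed $x$ and variable $y$,
\[
  |f(x-y)|\,|g(y)| = \bigl(|f(x-y)|^{p}|g(y)|^{q}\bigr)^{1/r}\,|f(x-y)|^{\,1-p/r}\,|g(y)|^{\,1-q/r}
\]
has nonnegative exponents on every factor. I would then apply the three-factor H\"{o}lder inequality in the variable $y$ with exponents $r$, $a:=\bigl(\tfrac1p-\tfrac1r\bigr)^{-1}$ and $b:=\bigl(\tfrac1q-\tfrac1r\bigr)^{-1}$, which are admissible because $\tfrac1r+\tfrac1a+\tfrac1b=\tfrac1p+\tfrac1q-\tfrac1r=1$. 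Using translation invariance, $\int|f(x-y)|^{p}\,dy=\|f\|_p^{p}$ and $\int|g(y)|^{q}\,dy=\|g\|_q^{q}$, and one obtains
\[
  \int|f(x-y)|\,|g(y)|\,dy \le \|f\|_p^{\,p/a}\,\|g\|_q^{\,q/b}\Bigl(\int|f(x-y)|^{p}|g(y)|^{q}\,dy\Bigr)^{1/r}.
\]
Raising to the power $r$, integrating in $x$, and evaluating $\iint|f(x-y)|^{p}|g(y)|^{q}\,dy\,dx=\|f\|_p^{p}\|g\|_q^{q}$ by Tonelli gives $\bigl\||f|\ast|g|\bigr\|_r^{r}\le\|f\|_p^{\,rp/a+p}\|g\|_q^{\,rq/b+q}$, and the exponent arithmetic $\tfrac{rp}{a}+p=rp\bigl(\tfrac1a+\tfrac1r\bigr)=r$, and likewise $\tfrac{rq}{b}+q=r$, collapses this to $\bigl\||f|\ast|g|\bigr\|_r\le\|f\|_p\|g\|_q$. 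Since the left side is finite, $\int|f(x-y)g(y)|\,dy<\infty$ for a.e.\ $x$, so $f\ast g$ is defined a.e., and $|f\ast g|\le|f|\ast|g|$ yields $\|f\ast g\|_r\le\|f\|_p\|g\|_q$.

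The main obstacle is bookkeeping rather than conceptual: the factorization exponents must be chosen so that all three H\"{o}lder exponents lie in $[1,\infty]$ and sum reciprocally to $1$, and then the algebra (notably $rp/a+p=r$) must be tracked carefully, while the boundary exponents --- exactly the cases where $a$ or $b$ degenerates to $\infty$ and one factor in the splitting disappears --- are handled separately at the start. An alternative I would mention but not pursue is to fix $f$ and interpolate the operator $g\mapsto f\ast g$ between the endpoint estimates $L^1\to L^p$ (Minkowski) and $L^{p'}\to L^\infty$ (H\"{o}lder) by the Riesz--Thorin theorem; this is shorter but invokes machinery heavier than the rest of the paper uses.
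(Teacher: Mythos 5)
The paper does not prove this lemma at all: Young's inequality is quoted as a known result from real analysis and then applied, so there is no in-paper argument to compare yours against. Your proof is correct and is the standard textbook derivation. The endpoint cases are disposed of properly ($p=1$ forces $q=r$ and reduces to Minkowski's integral inequality, $r=\infty$ reduces to H\"{o}lder applied to a translate, and the remaining configurations, e.g.\ $p=\infty$ forcing $q=1$, are covered by symmetry, so the case analysis is exhaustive). In the main case the three-factor splitting of $|f(x-y)||g(y)|$ has unit total exponent on each of $|f|$ and $|g|$, the auxiliary exponents $a=(1/p-1/r)^{-1}$ and $b=(1/q-1/r)^{-1}$ are admissible precisely because $r\geq p$ and $r\geq q$ follow from the scaling condition, the reciprocals sum to $1$, and the final exponent arithmetic $rp/a+p=r$ is right; working first with $|f|,|g|$ and invoking Tonelli correctly establishes that the convolution is defined almost everywhere before the inequality is transferred to $f\ast g$ itself.
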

\begin{theorem}\label{eq:Yongwithmequalsnmod2}
  Let $ m, n \in \mathbb{N} \cup \{0\}$ with $ m-n\equiv 0 (\text{mod 2}) $. Then for any $1< p,q,r<\infty $ satisfying the condition in Young's inequality given (\ref{eq:pqrcondition}), we have
  $$ \left(\frac{2}{r+1}+  \sum_{j=0}^{m+n-1}\frac{\binom{m+n}{j+1}^{r+1}-\binom{m+n}{j}^{r+1}}{(r+1) [\binom{m+n}{j+1}-\binom{m+n}{j}]}\right)^{\frac{1}{r}}\leq \left(\sum_{j=0}^{m}\binom{m}{j}^q  \right)^{\frac{1}{q}}   \left(\sum_{j=0}^{n}\binom{n}{j}^p  \right)^{\frac{1}{p}}.  $$
\end{theorem}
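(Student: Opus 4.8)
\noindent\emph{Proof strategy.} The plan is to apply Young's inequality to the pair $f_m\in L^q(\mathbb{R})$, $f_n\in L^p(\mathbb{R})$ — both memberships hold for every finite exponent by \eqref{eq:Lpnormforfn} — which gives $\|f_m\ast f_n\|_r\le\|f_m\|_q\,\|f_n\|_p$ under the relation $\frac1r=\frac1p+\frac1q-1$, and then to evaluate the left-hand side in closed form. The right-hand side is already known from \eqref{eq:Lpnormforfn}, namely $\|f_m\|_q=\bigl(\sum_{j=0}^m\binom{m}{j}^q\bigr)^{1/q}$ and $\|f_n\|_p=\bigl(\sum_{j=0}^n\binom{n}{j}^p\bigr)^{1/p}$, so the whole content of the theorem is the computation of $\|f_m\ast f_n\|_r$.

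First I would invoke \eqref{eq:Enplusm} and \eqref{eq:step4}: $f_m\ast f_n=\bigl(E^{1/2}+E^{-1/2}\bigr)^{m+n}(f_0\ast f_0)$, where $f_0\ast f_0$ is the width-two tent $\Lambda(x)=(1-|x|)^{+}$ supported on $[-1,1]$. Expanding the operator by the binomial theorem gives, with $M:=m+n$,
$$ g(x):=(f_m\ast f_n)(x)=\sum_{k=0}^{M}\binom{M}{k}\,\Lambda\!\left(x+\tfrac{M}{2}-k\right), $$
a superposition of translates of $\Lambda$ with apex at $k-M/2$. The key step is to extract the piecewise-linear structure of $g$: each summand has corners only at the points $\{k-1,k,k+1\}-M/2$, so $g$ is continuous, nonnegative, and piecewise linear with breakpoints exactly $x_i:=i-M/2$ for $i=-1,0,\dots,M+1$; it vanishes at $x_{-1}$ and $x_{M+1}$ (the ends of $\supp g=[-\tfrac{M}{2}-1,\tfrac{M}{2}+1]$); and for $0\le i\le M$ one has $g(x_i)=\sum_{k=0}^{M}\binom{M}{k}\Lambda(i-k)=\binom{M}{i}$, since $\Lambda(i-k)=\delta_{i,k}$ for integers $i,k$.

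Hence $g$ is the linear interpolant of its nodal values on each of the $M+2$ unit intervals between consecutive breakpoints, and, using the elementary identity $\int_0^1\bigl(a+(b-a)t\bigr)^r\,dt=\frac{b^{r+1}-a^{r+1}}{(r+1)(b-a)}$ (for $0\le a\ne b$) on the $M$ interior pieces and $\int_0^1 t^r\,dt=\int_0^1(1-t)^r\,dt=\frac{1}{r+1}$ on the two boundary pieces $[x_{-1},x_0]$ and $[x_M,x_{M+1}]$, we obtain
$$ \|f_m\ast f_n\|_r^{\,r}=\int_{\mathbb{R}}g^{r}=\frac{2}{r+1}+\sum_{j=0}^{m+n-1}\frac{\binom{m+n}{j+1}^{r+1}-\binom{m+n}{j}^{r+1}}{(r+1)\bigl[\binom{m+n}{j+1}-\binom{m+n}{j}\bigr]}. $$
This is exactly where the hypothesis $m-n\equiv 0\,(\text{mod 2})$ is used: it makes $M=m+n$ even, so $\binom{M}{j}\ne\binom{M}{j+1}$ for every $0\le j\le M-1$ (equality would force $j=(M-1)/2$), and no indeterminate $0/0$ term appears. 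Substituting this formula together with the two norm expressions into $\|f_m\ast f_n\|_r\le\|f_m\|_q\|f_n\|_p$ and taking $r$-th roots gives the asserted inequality.

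The one genuinely delicate point is the bookkeeping in the middle of the argument — verifying that the overlapping tents produce no spurious corners, that the nodal values collapse to single binomial coefficients through $\Lambda(i-k)=\delta_{i,k}$, and that the two extreme pieces are the half-tents $t^r$ and $(1-t)^r$ rather than interpolants between two nonzero values. The binomial expansion of the shift operator, the elementary integral, and the application of Young's inequality with the relation $\frac1r=\frac1p+\frac1q-1$ are all routine.
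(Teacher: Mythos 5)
Your proposal is correct and follows essentially the same route as the paper: expand $f_m\ast f_n=(E^{1/2}+E^{-1/2})^{m+n}(f_0\ast f_0)$ via the binomial theorem, recognize the result as a piecewise-linear function taking the values $\binom{m+n}{j}$ at the breakpoints, integrate $|f_m\ast f_n|^r$ piece by piece with the elementary formula $\int_0^1(a+(b-a)t)^r\,dt=\frac{b^{r+1}-a^{r+1}}{(r+1)(b-a)}$, and finish with Young's inequality. Your nodal-interpolation bookkeeping via $\Lambda(i-k)=\delta_{i,k}$ is a cleaner way to organize what the paper does by explicitly relabeling the overlapping intervals $I_{j+1}=J_j$, and your remark on where the parity hypothesis enters matches the paper's.
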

\begin{proof}
  We apply young inequality to the convolution $ f_m\ast f_n$. For notational convenience, let $ \frac{m+n}{2}: = \mu $.  By (\ref{eq:Enplusm}) we have
$$ f_m(x) \ast f_n(x) = \sum_{j=0 }^{m+n}\binom{m+n}{j}E ^{\mu-j} f_0(x)\ast f_0(x)$$
But by (\ref{eq:step4})
$$f_0(x)\ast f_0(x)= (1+x)\chi_{[-1, 0)}(x) + (1-x)\chi_{[0, 1)(x)}$$
Then
\begin{equation}\label{eq:Emuoneplusx}
  E^{\mu-j}[(1+x)\chi_{[-1, 0)}(x)] = (x+\mu+1-j)\chi_{[-\mu-1 +j , -\mu +j )}(x),
\end{equation}

\begin{equation}\label{eq:Emuoneminusx}
E^{\mu-j}    \left[(1-x)\chi_{[0, 1)}(x)\right] = (1-x-\mu +j)\chi_{[-\mu +j , -\mu +j+1 )}(x).
\end{equation}
Therefore,
$$f_m (x) \ast f_n (x)= \sum_{j=0}^{m+n}\binom{m+n}{j}(x+\mu + 1-j)\chi_{I_j}(x)+ (1-x-\mu +j)\chi_{J_j}(x)  $$
where
\begin{align}\label{IjandJj}
  I_j & := [-\mu-1 +j , -\mu +j ), \nonumber \\
  J_j & := [-\mu +j , -\mu +j+1 ),\quad j= 0, 1,2,3...,m+n.
\end{align}
 From (\ref{IjandJj}) we note that the last $m+n$ intervals in the first collection $I_j$ of unit intervals and the first  $m+n$ intervals in the second collection $ J_j$ of unit intervals coincide. The first unit interval $ I_0$  from the first collection and the last unit interval $ J_{m+n}$ remain. This is given by
\begin{equation}\label{eq:IjandJjrelated}
  I_{j+1}= J_j,\quad j=0,1,2,...,m+n-1.
\end{equation}

\begin{align}\label{eq:fmconvfnsegregated}
 f_m (x) \ast f_n (x)&= (1+\mu+x)\chi_{I_0}(x) \nonumber\\
  & + \sum_{j=0}^{m+n-1}\left[\binom{m+n}{j}(1-x-\mu+j)+ \binom{m+n}{j+1}(x+\mu -j)\right]\chi_{J_j}(x) \nonumber\\
   & + (1-x-\mu +j)\chi_{J_{m+n}}(x).
\end{align}
Now (\ref{eq:fmconvfnsegregated}) can be rearranged and rewritten as
\begin{align}\label{eq:fmconvfnrewritten}
 f_m (x) \ast f_n (x)&= (1+\mu+x)\chi_{I_0}(x) \nonumber\\
  & + \sum_{j=0}^{m+n-1}\left[\binom{m+n}{j} + \left(\binom{m+n}{j+1}- \binom{m+n}{j}\right) (x+\mu -j)\right]\chi_{J_j}(x) \nonumber\\
   & + (1-x-\mu +j)\chi_{J_{m+n}}(x).
\end{align}
Therefore,
\begin{align}\label{eq:fmconvfntozr}
   |f_m (x) \ast f_n (x)|^r &= (1+\mu+x)^r \chi_{I_0}(x) \nonumber\\
  & + \sum_{j=0}^{m+n-1}\left[\binom{m+n}{j} + \left(\binom{m+n}{j+1}- \binom{m+n}{j}\right) (x+\mu -j)\right]^r \chi_{J_j}(x) \nonumber\\
   & + (1-x-\mu +j)^r \chi_{J_{m+n}}(x).
\end{align}
Let us integrate the two separate terms as follows.
\begin{equation}\label{eq:firstseparteintegral}
  \int_{I_0}(1+\mu+x)^r dx=\int_{-\mu-1}^{-\mu}(1+\mu+x)^rdx= \int_{0}^{1} (1-x)^r dx = \frac{1}{1+r},
\end{equation}
and
\begin{equation}\label{eq:lastseparateintegral}
 \int_{j_{m+n}}(1+\mu-x)^r dx=\int_{\mu}^{\mu+1}(1+\mu-x)^rdx = \int_{0}^{1} x^rdx = \frac{1}{1+r}.
\end{equation}
For notational convenience let
 \begin{equation}\label{eq:alphaandbeta}
    \beta_j:= \binom{m+n}{j}, \quad \alpha_j:= \binom{m+n}{j+1}-\binom{m+n}{j}.
 \end{equation}

 For each $j$ the integral of each term in the summation given in (\ref{eq:fmconvfntozr}) over the unit interval $ J_j$ is

 \begin{align}\label{eq:integralsofthemiddleterms}
     \int_ {J_j} (\beta_j + \alpha_j (x+\mu -j ))^r dx &= \int_{-\mu+j}^{-\mu+j+1}(\beta_j + \alpha_j (x+\mu -j ))^r dx \nonumber \\
    & = \frac{(\alpha_j+\beta_j)^{r+1}-\beta_j^{r+1}}{\alpha_j (r+1)}= \frac{\binom{m+n}{j+1}^{r+1}-\binom{m+n}{j}^{r+1}}{(r+1) \left[\binom{m+n}{j+1}-\binom{m+n}{j}\right]}.
 \end{align}
Then collecting the results in (\ref{eq:firstseparteintegral}), (\ref{eq:lastseparateintegral}), and (\ref{eq:integralsofthemiddleterms})
we get
\begin{equation}\label{eq:theLrnormoffmconvfn}
  \| f_m \ast f_n \|_r^r  = \frac{2}{1+r}+  \sum_{j=0}^{m+n-1}\frac{\binom{m+n}{j+1}^{r+1}-\binom{m+n}{j}^{r+1}}{(r+1) \left[\binom{m+n}{j+1}-\binom{m+n}{j}\right]}.
\end{equation}
Now by considering that $ f_n \in L^p(\mathbb{R}),\,f_m \in L^q(\mathbb{R}) $, by using definition of the $ L^p$ norm of $ f_n,\, n\in \mathbb{N } \cup \{0\}$ given in (\ref{eq:Lpnormforfn}), and the result given the in  (\ref{eq:theLrnormoffmconvfn}), the Theorem follows by applying Young's inequality.
\end{proof}

\begin{corollary}
Let $ m, n \in \mathbb{N} \cup \{0\}$ with $ m-n\equiv 0 (\text{mod 2}) $. Then for any $1< p =r < \infty $, we have
  $$ \left(\frac{2}{r+1}+  \sum_{j=0}^{m+n-1}\frac{\binom{m+n}{j+1}^{r+1}-\binom{m+n}{j}^{r+1}}{(r+1) [\binom{m+n}{j+1}-\binom{m+n}{j}]}\right)^{\frac{1}{r}} \leq  2^m \left(\sum_{j=0}^{n}\binom{n}{j}^r  \right)^{\frac{1}{r}} $$
\end{corollary}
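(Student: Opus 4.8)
The plan is to observe that the corollary is simply the degenerate endpoint case of Theorem~\ref{eq:Yongwithmequalsnmod2} in which the second exponent collapses to $1$. Substituting $p=r$ into the conjugacy relation $\frac{1}{r}=\frac{1}{p}+\frac{1}{q}-1$ of (\ref{eq:pqrcondition}) forces $\frac{1}{q}=1$, i.e. $q=1$. Since $q=1$ lies on the boundary excluded by the hypotheses $1<p,q,r<\infty$ of Theorem~\ref{eq:Yongwithmequalsnmod2}, I would not invoke that theorem directly; instead I would go back to Young's inequality as stated in the Lemma above, whose hypotheses allow $1\le p,q,r\le\infty$ and in particular permit $q=1$.

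First I would note that the computation carried out in the proof of Theorem~\ref{eq:Yongwithmequalsnmod2}---the decomposition of $f_m\ast f_n$ in (\ref{eq:fmconvfnrewritten}) and the term-by-term integration leading to (\ref{eq:theLrnormoffmconvfn})---makes no use whatsoever of the exponents $p$ and $q$; it uses only the parity assumption $m-n\equiv 0\,(\mathrm{mod}\ 2)$, which places $\mu=\tfrac{m+n}{2}$ so that the intervals $I_j$, $J_j$ fit together as in (\ref{eq:IjandJjrelated}), together with $1<r<\infty$. Hence (\ref{eq:theLrnormoffmconvfn}) remains valid in the present setting and supplies exactly $\|f_m\ast f_n\|_r^r$, which is the quantity appearing under the $\tfrac1r$-th power on the left-hand side of the corollary.

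Next I would apply Young's inequality with $f_n\in L^r(\mathbb{R})$ and $f_m\in L^1(\mathbb{R})$---both memberships being guaranteed by the fact, established earlier in the paper, that $f_k\in L^p(\mathbb{R})$ for every $1\le p\le\infty$---taking exponents $p=r$, $q=1$, and output exponent $r$, so that $\frac1r=\frac1r+1-1$ holds. This gives $\|f_m\ast f_n\|_r\le \|f_n\|_r\,\|f_m\|_1$. Then I would substitute $\|f_m\|_1=2^m$ from (\ref{eq:L1normforfn}) and $\|f_n\|_r=\left(\sum_{j=0}^{n}\binom{n}{j}^{r}\right)^{1/r}$ from (\ref{eq:Lpnormforfn}), and combine with (\ref{eq:theLrnormoffmconvfn}); taking $r$-th roots (everything being nonnegative) yields precisely the claimed inequality.

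I do not expect a genuine obstacle here: the only subtlety is the one already flagged---that the needed exponent $q=1$ is not covered by the stated range of Theorem~\ref{eq:Yongwithmequalsnmod2}, so the argument must be routed through the Young inequality Lemma rather than quoted from the theorem. A secondary point worth a sentence of care is that formula (\ref{eq:theLrnormoffmconvfn}) was derived under the parity hypothesis $m-n\equiv 0\,(\mathrm{mod}\ 2)$, which is retained in the corollary, so no additional verification is required on that account.
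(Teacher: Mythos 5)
Your proof is correct and follows essentially the same route as the paper, which simply specializes Theorem \ref{eq:Yongwithmequalsnmod2} to $p=r$, $q=1$ and cites the already-computed norms. Your additional care in rerouting the endpoint case $q=1$ through the Young's inequality lemma (since $q=1$ lies outside the stated range $1<q<\infty$ of the theorem) while reusing the computation of $\|f_m\ast f_n\|_r^r$ from (\ref{eq:theLrnormoffmconvfn}) is a legitimate and worthwhile tightening, but the substance of the argument is identical to the paper's.
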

\begin{proof}
  The proof follows from Theorem \ref{eq:Yongwithmequalsnmod2} by fixing $1< r = p < \infty $, and $q =1$.
\end{proof}

\begin{corollary}
  Let $ m, n \in \mathbb{N} \cup \{0\}$ with $ m-n\equiv 0 (\text{mod 2}) $, we have
  \begin{equation}\label{eq:Youngforrequalto1}
    1+ \frac{1}{2}\sum_{j=0}^{m+n-1}\binom{m+n+1}{j+1} \leq 2^{m+n}
  \end{equation}
\end{corollary}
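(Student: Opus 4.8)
The plan is to extract $\|f_m\ast f_n\|_1$ from the $L^r$-norm formula already established inside the proof of Theorem~\ref{eq:Yongwithmequalsnmod2}, and then bound it by $\|f_m\|_1\|f_n\|_1$. First I would specialize the identity (\ref{eq:theLrnormoffmconvfn}) to the value $r=1$. One point deserves a line of justification here: the summands in (\ref{eq:theLrnormoffmconvfn}) carry the factor $\alpha_j=\binom{m+n}{j+1}-\binom{m+n}{j}$ in the denominator, and this vanishes exactly when $2j+1=m+n$, i.e. when $m+n$ is odd. Since $m-n\equiv 0\pmod 2$ forces $m+n$ to be even, we have $\alpha_j\neq 0$ for every $j\in\{0,1,\dots,m+n-1\}$, so the formula (\ref{eq:theLrnormoffmconvfn}) is valid verbatim at $r=1$ with no zero-denominator issue.

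Next I would simplify the resulting expression. At $r=1$ the free term $\tfrac{2}{1+r}$ equals $1$, and, factoring the difference of squares and applying Pascal's identity (\ref{eq:binomialsum}), each summand collapses:
$$\frac{\binom{m+n}{j+1}^{2}-\binom{m+n}{j}^{2}}{2\left[\binom{m+n}{j+1}-\binom{m+n}{j}\right]}=\frac{\binom{m+n}{j+1}+\binom{m+n}{j}}{2}=\frac{1}{2}\binom{m+n+1}{j+1}.$$
Therefore
$$\|f_m\ast f_n\|_1 = 1+\frac{1}{2}\sum_{j=0}^{m+n-1}\binom{m+n+1}{j+1}.$$
Finally I would invoke the $L^1$ endpoint of Young's inequality, namely the classical bound $\|f\ast g\|_1\le\|f\|_1\|g\|_1$ (the $p=q=r=1$ case of the Young's inequality Lemma). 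Since $f_m,f_n\in L^1(\mathbb{R})$ with $\|f_m\|_1=2^m$ and $\|f_n\|_1=2^n$ by (\ref{eq:L1normforfn}), this gives $\|f_m\ast f_n\|_1\le 2^m\cdot 2^n=2^{m+n}$; combining with the displayed identity yields (\ref{eq:Youngforrequalto1}).

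The computation is short, so there is no serious obstacle; the step I would be most careful about is the one flagged above, that is, confirming that the hypothesis $m-n\equiv 0\pmod 2$ is precisely what makes the $r=1$ instance of (\ref{eq:theLrnormoffmconvfn}) well-defined, so that the subsequent algebraic simplification via Pascal's identity is legitimate. I would also note explicitly that Young's inequality as stated in the excerpt assumes $1<p,q,r<\infty$, so the $L^1$ case should be cited as the standard convolution estimate rather than read off the Lemma directly.
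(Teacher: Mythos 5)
Your proposal is correct and follows essentially the same route as the paper: specialize the $L^r$-norm identity (\ref{eq:theLrnormoffmconvfn}) to $r=p=q=1$, collapse each summand to $\tfrac{1}{2}\binom{m+n+1}{j+1}$ via Pascal's identity, and bound $\|f_m\ast f_n\|_1$ by $\|f_m\|_1\|f_n\|_1=2^{m+n}$; your observation that the parity hypothesis is exactly what keeps the denominators $\binom{m+n}{j+1}-\binom{m+n}{j}$ nonzero matches the paper's own remark. The only difference is that the paper goes one step further and sums the binomial coefficients directly to show the inequality is in fact an equality, which your argument does not need but could note.
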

\begin{proof}
  The proof follows from Theorem \ref{eq:Yongwithmequalsnmod2} by fixing $r = p= q= 1$. Indeed,
  $$ 1+ \frac{1}{2}\sum_{j=0}^{m+n-1}\binom{m+n+1}{j+1}= 1+ \frac{1}{2}\left(\sum_{j=0}^{m+n+1}\binom{m+n+1}{j}  -2\right)= \frac{1}{2}\sum_{j=0}^{m+n+1}\binom{m+n+1}{j} = 2^{m+n}, $$ showing that  equality holds for all $ m, n \in \mathbb{N} \cup \{0\}$ with $ m-n\equiv 0 (\text{mod 2})$.
\end{proof}

Note that the necessity of the inclusion of the condition that $ n=m (\text{mod 2})$ in Theorem \ref{eq:Yongwithmequalsnmod2} is that the denominator of the expression in the summation will never be zero in this case. The next theorem will treat the case where $ n-m \equiv 1 (\text{mod 2})$.
\begin{theorem}\label{eq:Yongwithmdiffrentnmod2}
  Let $ m, n \in \mathbb{N} \cup \{0\}$ with $ m-n \equiv 1 (\text{mod 2}) $. Then for any $p,q,r $ satisfying the condition in Young's inequality given by (\ref{eq:pqrcondition}), we have
  $$ \left(\frac{2}{r+1}+\binom{m+n}{\frac{m+n-1}{2}}^r + \sum_{\substack{j=0 \\ j\neq \frac{m+n-1}{2}}}^ {m+n-1}\frac{\binom{m+n}{j+1}^{r+1}-\binom{m+n}{j}^{r+1}}{(r+1) [\binom{m+n}{j+1}-\binom{m+n}{j}]}\right)^{\frac{1}{r}}\leq \left(\sum_{j=0}^{m}\binom{m}{j}^q  \right)^{\frac{1}{q}}   \left(\sum_{j=0}^{n}\binom{n}{j}^p  \right)^{\frac{1}{p}}  .$$
\end{theorem}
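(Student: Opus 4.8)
The plan is to run the argument of Theorem~\ref{eq:Yongwithmequalsnmod2} almost verbatim, isolating the single place where the parity hypothesis $m-n\equiv 1(\text{mod 2})$ forces a different treatment. As before, set $\mu:=\frac{m+n}{2}$ (now a half-integer, which changes none of the algebra) and start from the expansion $f_m\ast f_n=\sum_{j=0}^{m+n}\binom{m+n}{j}E^{\mu-j}\bigl(f_0\ast f_0\bigr)$ furnished by (\ref{eq:Enplusm}). Substituting the closed form (\ref{eq:step4}) of $f_0\ast f_0$ and collecting the resulting shifted linear pieces exactly as in (\ref{IjandJj})--(\ref{eq:fmconvfnrewritten}) gives
\begin{align*}
 f_m(x)\ast f_n(x)&=(1+\mu+x)\chi_{I_0}(x)+(1+\mu-x)\chi_{J_{m+n}}(x)\\
 &\quad+\sum_{j=0}^{m+n-1}\bigl[\beta_j+\alpha_j(x+\mu-j)\bigr]\chi_{J_j}(x),
\end{align*}
with $\beta_j,\alpha_j$ as in (\ref{eq:alphaandbeta}) and $I_j,J_j$ as in (\ref{IjandJj}); since a convolution of non-negative functions is non-negative, each bracket is $\ge 0$ on the relevant interval and the absolute value may be dropped before raising to the power $r$.

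The new phenomenon is that $\alpha_j=\binom{m+n}{j+1}-\binom{m+n}{j}$ vanishes precisely when $j+1=(m+n)-j$, i.e.\ at $j_0:=\frac{m+n-1}{2}$, which is an integer in $\{0,1,\dots,m+n-1\}$ exactly because $m+n$ is odd; by the strict unimodality of the $(m+n)$-th row of Pascal's triangle this is the unique such index, so $\alpha_j\neq 0$ for every $j\neq j_0$. On $J_{j_0}$ the convolution is therefore the constant $\beta_{j_0}=\binom{m+n}{(m+n-1)/2}$, hence $\int_{J_{j_0}}|f_m\ast f_n|^{r}\,dx=\binom{m+n}{(m+n-1)/2}^{r}$. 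For every other $j$ the substitution $u=x+\mu-j\in(0,1)$ gives $\int_{J_j}(\beta_j+\alpha_j u)^r\,du=\frac{(\alpha_j+\beta_j)^{r+1}-\beta_j^{r+1}}{(r+1)\alpha_j}$, which equals the summand displayed in (\ref{eq:integralsofthemiddleterms}) and is valid for either sign of $\alpha_j$; the two end intervals $I_0$ and $J_{m+n}$ each contribute $\frac{1}{r+1}$ exactly as in (\ref{eq:firstseparteintegral})--(\ref{eq:lastseparateintegral}).

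Adding these contributions yields the analogue of (\ref{eq:theLrnormoffmconvfn}),
$$\|f_m\ast f_n\|_r^{r}=\frac{2}{r+1}+\binom{m+n}{\tfrac{m+n-1}{2}}^{r}+\sum_{\substack{j=0\\ j\neq (m+n-1)/2}}^{m+n-1}\frac{\binom{m+n}{j+1}^{r+1}-\binom{m+n}{j}^{r+1}}{(r+1)\bigl[\binom{m+n}{j+1}-\binom{m+n}{j}\bigr]}.$$
Since $f_m\in L^q(\mathbb{R})$ and $f_n\in L^p(\mathbb{R})$ with $\frac1r=\frac1p+\frac1q-1$, Young's inequality gives $\|f_m\ast f_n\|_r\le\|f_m\|_q\|f_n\|_p$; substituting the norm formula (\ref{eq:Lpnormforfn}) for $f_m$ and $f_n$ and taking $r$-th roots produces the stated inequality.

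The only genuinely new step, and hence the point to handle with care, is the isolation of $j_0=\frac{m+n-1}{2}$: one must confirm it is an admissible integer index, that it is the unique zero of $\alpha_j$ (strict unimodality of the binomial row together with $\binom{m+n}{j_0}=\binom{m+n}{j_0+1}$), and that replacing the would-be singular term by $\binom{m+n}{(m+n-1)/2}^{r}$ is exactly the $\alpha_j\to 0$ limit of $\frac{(\alpha_j+\beta_j)^{r+1}-\beta_j^{r+1}}{(r+1)\alpha_j}$. One should also note in passing that for $j$ above $j_0$ the coefficient $\alpha_j$ is negative while $\beta_j+\alpha_j(x+\mu-j)$ still interpolates linearly between the positive values $\binom{m+n}{j}$ and $\binom{m+n}{j+1}$, so positivity of $f_m\ast f_n$ on each $J_j$ is automatic and the sign of $\alpha_j$ is irrelevant to the integral formula. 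Everything else is a line-by-line transcription of the even case.
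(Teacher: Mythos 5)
Your proposal is correct and follows essentially the same route as the paper: isolate the unique index $j_0=\tfrac{m+n-1}{2}$ where $\alpha_{j_0}=\binom{m+n}{j_0+1}-\binom{m+n}{j_0}=0$, integrate the constant $\beta_{j_0}^r$ over that unit interval to get $\binom{m+n}{(m+n-1)/2}^r$, keep the remaining terms exactly as in the even-parity theorem, and finish with Young's inequality. Your added checks (uniqueness of the vanishing $\alpha_j$ via unimodality, the $\alpha_j\to 0$ limit, and the irrelevance of the sign of $\alpha_j$) are sound and in fact more careful than the paper's own brief argument.
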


\begin{proof}
  For $ m-n \equiv 1 (\text{mod 2}) $, $\binom{m+n}{j+1}= \binom{m+n}{j} $ when $ j= \frac{m+n-1}{2}$. So, by the definition of $\alpha_j $ given in (\ref{eq:alphaandbeta}), $ \alpha_{\frac{m+n-1}{2}}= 0 $. The integral for the term corresponding to the index $ j= \frac{m+n-1}{2}$ is  the integral of the constant $(\beta_{\frac{m+n-1}{2}} )^r = \binom{m+n}{\frac{m+n-1}{2}}^r $ over the unit interval $ J_{\frac{m+n-1}{2}}$. This yields $\binom{m+n}{\frac{m+n-1}{2}}^r  $. The integrals of other terms  are similar to that of Theorem \ref{eq:Yongwithmequalsnmod2}. This proves the Theorem.
\end{proof}

\begin{corollary}
  Let $ m, n \in \mathbb{N} \cup \{0\}$ with $ m-n \equiv 1 (\text{mod 2}) $. Then for any $1< p =r <\infty  $ satisfying the condition in Young's inequality given by (\ref{eq:pqrcondition}), we have
  $$ \left(\frac{2}{r+1}+\binom{m+n}{\frac{m+n-1}{2}}^r + \sum_{\substack{j=0 \\ j\neq \frac{m+n-1}{2}}}^ {m+n-1}\frac{\binom{m+n}{j+1}^{r+1}-\binom{m+n}{j}^{r+1}}{(r+1) [\binom{m+n}{j+1}-\binom{m+n}{j}]}\right)^{\frac{1}{r}}\leq   2^m  \left(\sum_{j=0}^{n}\binom{n}{j}^r  \right)^{\frac{1}{r}}  .$$
\end{corollary}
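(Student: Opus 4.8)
The plan is to derive this inequality as a direct specialization of Theorem \ref{eq:Yongwithmdiffrentnmod2}, exactly paralleling the corollary that was stated after Theorem \ref{eq:Yongwithmequalsnmod2}. The master inequality there reads
$$ \left(\frac{2}{r+1}+\binom{m+n}{\frac{m+n-1}{2}}^r + \sum_{\substack{j=0 \\ j\neq \frac{m+n-1}{2}}}^ {m+n-1}\frac{\binom{m+n}{j+1}^{r+1}-\binom{m+n}{j}^{r+1}}{(r+1) [\binom{m+n}{j+1}-\binom{m+n}{j}]}\right)^{\frac{1}{r}}\leq \left(\sum_{j=0}^{m}\binom{m}{j}^q  \right)^{\frac{1}{q}}   \left(\sum_{j=0}^{n}\binom{n}{j}^p  \right)^{\frac{1}{p}},$$
valid whenever $1/r = 1/p + 1/q - 1$. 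The left-hand side does not involve $p$ or $q$ at all, so the only work is to pick an admissible triple that makes the right-hand side collapse to the claimed form $2^m\bigl(\sum_{j=0}^n \binom{n}{j}^r\bigr)^{1/r}$.

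First I would set $q = 1$, which by Young's condition $1/r = 1/p + 1/q - 1 = 1/p$ forces $p = r$; the hypothesis $1 < p = r < \infty$ in the corollary is exactly what is needed for this to be a legitimate choice (with $p = r > 1$ finite and $q = 1$ on the boundary). Then I would substitute: the factor $\bigl(\sum_{j=0}^m \binom{m}{j}^q\bigr)^{1/q}$ becomes $\sum_{j=0}^m \binom{m}{j} = 2^m$ by the identity (\ref{eq:L1normforfn}), and the factor $\bigl(\sum_{j=0}^n \binom{n}{j}^p\bigr)^{1/p}$ becomes $\bigl(\sum_{j=0}^n \binom{n}{j}^r\bigr)^{1/r}$. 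This yields precisely the stated bound, and the proof is a one-line invocation of Theorem \ref{eq:Yongwithmdiffrentnmod2} together with (\ref{eq:L1normforfn}).

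I anticipate essentially no obstacle here — the statement is a corollary in the literal sense, and the only thing worth double-checking is that the parameter regime is consistent. Specifically, one should verify that $q = 1$ is genuinely permitted in Young's inequality (it is: the lemma allows $1 \le p, q, r \le \infty$), and that the degenerate index $j = (m+n-1)/2$ on the left-hand side is still well-defined, which it is since $m - n \equiv 1 \pmod 2$ guarantees $m+n$ is odd so $(m+n-1)/2$ is an integer in range and the summation excludes exactly that term where $\alpha_j = 0$. The left-hand side is inherited verbatim from Theorem \ref{eq:Yongwithmdiffrentnmod2}, so nothing about the potentially delicate term-by-term integration needs to be revisited.

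Thus the proof I would write is: apply Theorem \ref{eq:Yongwithmdiffrentnmod2} with $q = 1$ and $p = r$ (admissible by (\ref{eq:pqrcondition})), and simplify the right-hand side using $\sum_{j=0}^m \binom{m}{j} = 2^m$ from (\ref{eq:L1normforfn}).
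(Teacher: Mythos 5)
Your proposal is correct and matches the paper's own proof, which likewise specializes Theorem \ref{eq:Yongwithmdiffrentnmod2} to $q=1$, $p=r$ and uses $\sum_{j=0}^{m}\binom{m}{j}=2^m$. No further comment is needed.
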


\begin{proof}
  The proof follows from Theorem \ref{eq:Yongwithmdiffrentnmod2} by fixing $1< r = p < \infty $, and $q =1$.
\end{proof}

\begin{corollary}
  Let $ m, n \in \mathbb{N} \cup \{0\}$ with $ m-n\equiv 1 (\text{mod 2}) $, we have
  \begin{equation}\label{eq:Youngforrequalto1mdiffeertonmod2}
    1+  \binom{m+n}{\frac{n+m-1}{2}} + \frac{1}{2}\sum_{\substack{j=0 \\ j\neq \frac{m+n-1}{2}}}^ {m+n-1}\binom{m+n+1}{j+1} \leq 2^{m+n}
  \end{equation}
\end{corollary}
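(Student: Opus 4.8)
The plan is to specialize Theorem~\ref{eq:Yongwithmdiffrentnmod2} to the endpoint exponents $r=p=q=1$, exactly as was done for the even case leading to~(\ref{eq:Youngforrequalto1}), and then to observe that in this degenerate regime the resulting inequality is in fact an identity.

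First I would check admissibility of the triple $(p,q,r)=(1,1,1)$: it satisfies the Young condition~(\ref{eq:pqrcondition}), since $1=1+1-1$, and $1\le p,q,r\le\infty$, so Theorem~\ref{eq:Yongwithmdiffrentnmod2} applies. With $r=1$ the outer exponent $\tfrac1r$ is trivial, the constant $\tfrac{2}{r+1}$ becomes $1$, the isolated middle term $\binom{m+n}{\frac{m+n-1}{2}}^{r}$ becomes $\binom{m+n}{\frac{m+n-1}{2}}$, and each summand collapses via the difference-of-squares factorization together with Pascal's identity~(\ref{eq:binomialsum}):
\begin{equation*}
\frac{\binom{m+n}{j+1}^{2}-\binom{m+n}{j}^{2}}{2\left[\binom{m+n}{j+1}-\binom{m+n}{j}\right]}=\frac{\binom{m+n}{j+1}+\binom{m+n}{j}}{2}=\frac12\binom{m+n+1}{j+1}.
\end{equation*}
Hence the left-hand side of Theorem~\ref{eq:Yongwithmdiffrentnmod2} becomes precisely the left-hand side of~(\ref{eq:Youngforrequalto1mdiffeertonmod2}), while setting $p=q=1$ on the right-hand side gives $\left(\sum_{j=0}^{m}\binom{m}{j}\right)\left(\sum_{j=0}^{n}\binom{n}{j}\right)=2^{m}2^{n}=2^{m+n}$, which is the claimed bound.

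To finish I would verify that equality actually holds, mirroring the even case. Since $m-n$ is odd, $m+n$ is odd, so $\frac{m+n\pm1}{2}$ are integers and $\binom{m+n}{\frac{m+n+1}{2}}=\binom{m+n}{\frac{m+n-1}{2}}$ by symmetry, whence $\binom{m+n+1}{\frac{m+n+1}{2}}=2\binom{m+n}{\frac{m+n-1}{2}}$ by Pascal's identity. Using $\sum_{k=1}^{m+n}\binom{m+n+1}{k}=2^{m+n+1}-2$, the sum over the excluded index is $2^{m+n+1}-2-\binom{m+n+1}{\frac{m+n+1}{2}}$; substituting this into the left-hand side of~(\ref{eq:Youngforrequalto1mdiffeertonmod2}) makes the two occurrences of $\binom{m+n}{\frac{m+n-1}{2}}$ cancel, leaving exactly $2^{m+n}$.

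There is no serious obstacle here; the only points demanding care are the bookkeeping of which binomial coefficient is dropped from the sum (the term with $j+1=\frac{m+n+1}{2}$) and the parity observation that makes the symmetry identity available. Everything else is the routine arithmetic already rehearsed in the $m-n\equiv0\,(\text{mod }2)$ corollary.
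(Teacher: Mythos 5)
Your proof is correct and follows the same route as the paper: specialize Theorem~\ref{eq:Yongwithmdiffrentnmod2} to $r=p=q=1$ (admissible for Young's inequality) and simplify each summand to $\tfrac12\binom{m+n+1}{j+1}$ via Pascal's identity. In fact your final bookkeeping is more accurate than the paper's own verification: the excluded index contributes $\tfrac12\binom{m+n+1}{\frac{m+n+1}{2}}=\binom{m+n}{\frac{m+n-1}{2}}$ to the restricted half-sum, so the left-hand side equals $2^{m+n}$ exactly (as you claim, and as the cases $(m,n)=(0,1)$ and $(1,2)$ confirm), whereas the paper's computation subtracts the full $\binom{m+n+1}{\frac{m+n+1}{2}}$ rather than half of it and lands on $2^{m+n}-\binom{m+n}{\frac{m+n-1}{2}}$; either way the stated inequality holds, but equality is the correct conclusion.
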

\begin{proof}
  The proof follows from Theorem \ref{eq:Yongwithmdiffrentnmod2} by setting $ r = p= q=1$.
  In fact, for  every $ m, n \in \mathbb{N} \cup \{0\}$ with $ m-n\equiv 1 (\text{mod 2})$,
   \begin{align*}
    1+  \binom{m+n}{\frac{n+m-1}{2}} + \frac{1}{2}\sum_{\substack{j=0 \\ j\neq \frac{m+n-1}{2}}}^ {m+n-1}\binom{m+n+1}{j+1} &= 1 + \binom{m+n}{\frac{n+m-1}{2}}- \binom{m+n+1}{\frac{n+m+1}{2}} + \frac{1}{2}\sum_{j=0}^ {m+n-1}\binom{m+n+1}{j+1}\\
      & = \binom{m+n}{\frac{n+m-1}{2}}- \binom{m+n+1}{\frac{n+m+1}{2}} + \frac{1}{2}\sum_{j=0}^ {m+n+1}\binom{m+n+1}{j}   \\
      & = \binom{m+n}{\frac{n+m-1}{2}}- \binom{m+n+1}{\frac{n+m+1}{2}}+ 2^{m+n}\\
      &= - \binom{m+n}{\frac{n+m-1}{2}}+ 2^{m+n} \leq  2^{m+n}.
   \end{align*}
   \end{proof}
\begin{lemma}\label{eq:lininitynormofconv}
\begin{equation}\label{eq:fnastfminfinity}
   \|f_n\ast f_m\|_\infty = \max \limits_{0\leq j \leq m+n}\binom{m+n}{j}= \|f_{n+m}\|_\infty
\end{equation}
  \end{lemma}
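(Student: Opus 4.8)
The plan is to read $\|f_n\ast f_m\|_\infty$ directly off the piecewise-affine closed form of the convolution obtained in the proof of Theorem~\ref{eq:Yongwithmequalsnmod2}. First I would observe that the derivation of (\ref{eq:fmconvfnrewritten}) from (\ref{eq:Enplusm}) and (\ref{eq:step4}) never used the parity hypothesis $m-n\equiv 0\ (\text{mod 2})$ --- that hypothesis was needed only afterwards, to guarantee $\alpha_j\neq 0$ when \emph{integrating} the individual pieces --- so (\ref{eq:fmconvfnrewritten}) is valid for all $m,n\in\mathbb{N}\cup\{0\}$. Writing $\mu=\frac{m+n}{2}$, it presents $f_m\ast f_n$ as a continuous function (a finite superposition of shifted copies of the tent $f_0\ast f_0$), supported on $[-\mu-1,\mu+1]$, which is affine on each of the intervals $I_0,\,J_0,\dots,J_{m+n}$ of (\ref{IjandJj}); on $J_j$ it interpolates linearly between the value $\binom{m+n}{j}$ at the left endpoint and $\binom{m+n}{j+1}$ at the right endpoint, while on $I_0$ (resp.\ $J_{m+n}$) it rises linearly from $0$ to $\binom{m+n}{0}$ (resp.\ falls from $\binom{m+n}{m+n}$ to $0$).

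Next I would invoke the elementary fact that a continuous piecewise-affine function on a compact interval attains its extrema at the breakpoints. Here the breakpoints are $-\mu-1,\,-\mu,\,-\mu+1,\dots,\mu,\,\mu+1$, and the values there are $0$ together with $\binom{m+n}{j}$ for $0\le j\le m+n$. Since $f_m\ast f_n$ is continuous, its essential supremum coincides with its supremum, so by commutativity of convolution
$$\|f_n\ast f_m\|_\infty = \sup_{x\in\mathbb{R}}\bigl|f_m\ast f_n(x)\bigr| = \max\Big\{0,\ \max_{0\le j\le m+n}\binom{m+n}{j}\Big\} = \max_{0\le j\le m+n}\binom{m+n}{j},$$
the last step because every binomial coefficient $\binom{m+n}{j}$ is positive.

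Finally, comparing with the $L^\infty$ norm of $f_{m+n}$ recorded in (\ref{eq:Linfinitynormeven})--(\ref{eq:Linfinitynormodd}) (equivalently, $\|f_{m+n}\|_\infty$ is the largest entry in row $m+n$ of Pascal's triangle), one has $\max_{0\le j\le m+n}\binom{m+n}{j}=\|f_{m+n}\|_\infty$, which closes the chain of equalities in (\ref{eq:fnastfminfinity}). I do not anticipate a real obstacle; the only points deserving a little care are verifying that (\ref{eq:fmconvfnrewritten}) does hold without the parity assumption --- in particular that a vanishing slope $\alpha_j$ causes no trouble, the corresponding piece being simply the constant $\binom{m+n}{j}$ --- and noting that the two boundary intervals $I_0$ and $J_{m+n}$, where the function dips to $0$, cannot affect the maximum.
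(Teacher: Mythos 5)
Your proposal is correct and follows essentially the same route as the paper: both read the supremum off the piecewise-affine closed form (\ref{eq:fmconvfnrewritten}) and note that the extrema of a piecewise-linear function occur at the breakpoints, where the values are $0$ and the binomial coefficients $\binom{m+n}{j}$. Your explicit check that (\ref{eq:fmconvfnrewritten}) holds without the parity hypothesis (a vanishing slope $\alpha_j$ just gives a constant piece) is a point the paper leaves implicit, but it does not change the argument.
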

\begin{proof}
  According to (\ref{eq:fmconvfnrewritten}), $f_n \ast f_m $ is piecewise linear.We calculate the maximum value on each unit interval and then take the  maximum over all the unit intervals. The maximum value on each unit interval appears at either end points of interval, as linear functions have no interior critical points. Accordingly,
  \begin{align*}
     & \max \limits_{x \in I_0} (1+\mu+x) \chi_{I_0}(x)= \max \{0,1\}= 1,\\
     &  \max \limits_{x \in J_j} \beta_j + \alpha_j (1+\mu+x) \chi_{I_0}(x)=  \max  \limits_{j= 0,1,...,m+n-1} \left \{\binom{m+n}{j}, \binom{m+n}{j+1}\right\},\\
     & \max \limits_{x \in I_{m+n}} (1-x-\mu+j) \chi_{I_{m+n}}(x)= \max \{1,0\}= 1.
  \end{align*}
  Summarizing all the above results and taking overall maximum value we get
  $$\|f_n\ast f_m\|_\infty = \max \limits_{0\leq j \leq m+n}\binom{m+n}{j}. $$
  The second equality in (\ref{eq:fnastfminfinity}) follows from (\ref{eq:Linfinitynormeven}) and (\ref{eq:Linfinitynormodd}).
\end{proof}
\begin{theorem}

 $$ \max  \limits_{0 \leq j \leq m+n} \left \{\binom{m+n}{j} \right\} \leq 2^m \left[ \max  \limits_{0 \leq j \leq n} \left \{\binom{n}{j}\right\}\right]$$
\end{theorem}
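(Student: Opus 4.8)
The plan is to recognize this inequality as the $L^\infty$ instance of Young's inequality applied to $f_n$ and $f_m$, exactly in the spirit of Theorems \ref{eq:Yongwithmequalsnmod2} and \ref{eq:Yongwithmdiffrentnmod2} but with the endpoint exponents. First I would invoke Young's inequality (the Lemma above) with the choice $p=\infty$, $q=1$, which satisfies the compatibility condition \eqref{eq:pqrcondition} since $\frac{1}{r}=\frac{1}{\infty}+\frac{1}{1}-1=0$, hence $r=\infty$. Since $f_n\in L^\infty(\mathbb{R})$ and $f_m\in L^1(\mathbb{R})$ (established in the subsection on the $L^p$ norms of $f_n$), Young's inequality gives $f_n\ast f_m\in L^\infty(\mathbb{R})$ and
$$ \|f_n\ast f_m\|_\infty \leq \|f_n\|_\infty\,\|f_m\|_1. $$

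Next I would substitute the two norms that have already been computed. By \eqref{eq:L1normforfn} we have $\|f_m\|_1=2^m$, and by \eqref{eq:Linfinitynormeven}–\eqref{eq:Linfinitynormodd} we have $\|f_n\|_\infty=\max_{0\leq j\leq n}\binom{n}{j}$. On the left-hand side I would apply Lemma \ref{eq:lininitynormofconv}, which identifies $\|f_n\ast f_m\|_\infty=\max_{0\leq j\leq m+n}\binom{m+n}{j}$. Chaining these three facts yields
$$ \max_{0\leq j\leq m+n}\binom{m+n}{j} \;=\; \|f_n\ast f_m\|_\infty \;\leq\; \|f_n\|_\infty\,\|f_m\|_1 \;=\; 2^m\max_{0\leq j\leq n}\binom{n}{j}, $$
which is precisely the asserted inequality.

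There is essentially no analytic obstacle here; the only point requiring a moment's care is that Young's inequality is being used at the endpoint triple $(p,q,r)=(\infty,1,\infty)$, where it reduces to the elementary statement that convolution of an $L^\infty$ function with an $L^1$ function is bounded with $\|f\ast g\|_\infty\leq\|f\|_\infty\|g\|_1$; this is immediate from the definition \eqref{eq:convolutionformula} of convolution and does not even need the interpolation machinery. If one prefers to stay strictly within the finitely-supported simple-function setting used throughout the paper, the same bound also follows directly from the piecewise-linear description \eqref{eq:fmconvfnrewritten} of $f_n\ast f_m$ together with $\|f_m\|_1=2^m$, but the Young's inequality route is the shortest and matches the paper's overarching method of deriving combinatorial inequalities from real-analytic ones.
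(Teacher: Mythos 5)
Your proposal is correct and follows exactly the same route as the paper: Young's inequality at the endpoint $(p,q,r)=(\infty,1,\infty)$, combined with Lemma \ref{eq:lininitynormofconv} for $\|f_n\ast f_m\|_\infty$ and the computed values $\|f_m\|_1=2^m$ and $\|f_n\|_\infty=\max_{0\leq j\leq n}\binom{n}{j}$. Your added remark that the endpoint case reduces to the elementary bound $\|f\ast g\|_\infty\leq\|f\|_\infty\|g\|_1$ is a sensible clarification but does not change the argument.
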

\begin{proof}
  The proof follows from the Young's inequality with $ r = p=\infty $, and $q=1$ so that
   $$ \|f_m \ast f_n\|_\infty \leq \|f_n\|_\infty \|f_m\|_1 .$$
    We have applied Lemma \ref{eq:lininitynormofconv} to find $\|f_m \ast f_n\|_\infty  $ and used the results in (\ref{eq:L1normforfn}), and (\ref{eq:Linfinitynormeven}) or (\ref{eq:Linfinitynormodd}).
\end{proof}

\begin{corollary}
  If $ m $ and $n$ are both odd then
  $$ \binom{m+n}{\frac{m+n}{2}} \leq \binom{m}{\frac{m-1}{2}}2^n. $$
  If $ m $ and $n$ are both even then
  $$ \binom{m+n}{\frac{m+n}{2}} \leq \binom{m}{\frac{m}{2}}2^n. $$
\end{corollary}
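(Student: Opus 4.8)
The plan is to obtain the Corollary as an immediate specialisation of the Theorem stated just above it, after interchanging the roles of $m$ and $n$. Since the convolution is commutative, $f_m\ast f_n=f_n\ast f_m$, so applying Young's inequality with $r=p=\infty$ and $q=1$ to the pair in the opposite order gives $\|f_n\ast f_m\|_\infty\le\|f_m\|_\infty\|f_n\|_1$. Combining this with Lemma \ref{eq:lininitynormofconv} for the left-hand side and with (\ref{eq:L1normforfn}) for $\|f_n\|_1=2^n$, I obtain
\[
  \max_{0\le j\le m+n}\binom{m+n}{j}\ \le\ 2^{n}\,\max_{0\le j\le m}\binom{m}{j},
\]
which is just the preceding Theorem with $m$ and $n$ swapped.

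Next I would evaluate the two maxima using the parity-dependent closed forms (\ref{eq:Linfinitynormeven}) and (\ref{eq:Linfinitynormodd}). When $m$ and $n$ have the same parity, $m+n$ is even, so by (\ref{eq:Linfinitynormeven}) the left-hand maximum equals $\binom{m+n}{(m+n)/2}$. For the right-hand maximum there are two cases. If $m$ and $n$ are both odd, then $m$ is odd and (\ref{eq:Linfinitynormodd}) gives $\max_{0\le j\le m}\binom{m}{j}=\binom{m}{(m-1)/2}$, which yields the first displayed inequality of the Corollary. If $m$ and $n$ are both even, then $m$ is even and (\ref{eq:Linfinitynormeven}) gives $\max_{0\le j\le m}\binom{m}{j}=\binom{m}{m/2}$, yielding the second. (Note that in both cases $m-n\equiv 0(\text{mod }2)$, so one may equally route the argument through Theorem \ref{eq:Yongwithmequalsnmod2}, but using the $L^\infty$ endpoint as above is the shortest path.)

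I do not expect a genuine obstacle here; the argument is pure bookkeeping. The only point worth an explicit word is the interchange of $m$ and $n$, which is harmless because both the hypothesis (equal parity of $m$ and $n$) and the object $f_n\ast f_m$ are symmetric in the two indices. Everything else reduces to substituting the parity formulas for the largest binomial coefficient in a row of Pascal's triangle into the inequality displayed above.
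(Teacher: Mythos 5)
Your proposal is correct and is essentially the derivation the paper intends: the corollary is stated without its own proof as an immediate consequence of the preceding theorem, and your route (swap $m$ and $n$ using the symmetry of $f_m\ast f_n$, then substitute the parity formulas (\ref{eq:Linfinitynormeven}) and (\ref{eq:Linfinitynormodd}) for the row maxima) is exactly the intended specialisation. Your explicit remark that the swap is needed to match the paper's stated form $\binom{m}{\cdot}\,2^n$ is a worthwhile clarification, since the theorem as printed has the constant $2^m$ on the other factor.
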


\begin{corollary}
 Let $ n \in \mathbb{N} $. If $ n $ is odd then $\binom{2n}{n} \leq \binom{n}{\frac{n-1}{2}}2^n $. If $ n $ is even then $\binom{2n}{n} \leq \binom{n}{\frac{n}{2}}2^n $.
\end{corollary}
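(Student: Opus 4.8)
The plan is to read off this statement as the case $m=n$ of the preceding corollary, so essentially no new work is required. First I would note that taking $m=n$ turns the hypotheses ``$m$ and $n$ both odd'' and ``$m$ and $n$ both even'' into ``$n$ odd'' and ``$n$ even'' respectively, and that the difference $m-n=0$ is even, so the chain of results leading back through the Theorem bounding $\max_{0\le j\le m+n}\binom{m+n}{j}$ by $2^m\max_{0\le j\le n}\binom{n}{j}$ applies with no parity conflict.

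Next I would substitute $m=n$ into the two displayed inequalities of the preceding corollary. The first, $\binom{m+n}{\frac{m+n}{2}}\le\binom{m}{\frac{m-1}{2}}2^n$, becomes $\binom{2n}{n}\le\binom{n}{\frac{n-1}{2}}2^n$, which is the claimed bound when $n$ is odd (then $m+n=2n$ is even, so $\frac{m+n}{2}=n$ is a legitimate index). The second, $\binom{m+n}{\frac{m+n}{2}}\le\binom{m}{\frac{m}{2}}2^n$, becomes $\binom{2n}{n}\le\binom{n}{\frac{n}{2}}2^n$, which is the claimed bound when $n$ is even. This is exactly the assertion of the corollary.

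For completeness I would also record the direct route: apply the Theorem $\max_{0\le j\le m+n}\binom{m+n}{j}\le 2^m\max_{0\le j\le n}\binom{n}{j}$ with $m=n$. The left-hand side then equals $\binom{2n}{n}$, since the central coefficient is the largest entry of the even row $2n$, while the right-hand side is $2^n$ times the largest entry of row $n$, namely $\binom{n}{n/2}$ when $n$ is even and $\binom{n}{(n-1)/2}=\binom{n}{(n+1)/2}$ when $n$ is odd, by (\ref{eq:Linfinitynormeven}) and (\ref{eq:Linfinitynormodd}). Because the argument is a pure specialization of results already established, there is no genuine obstacle; the only point requiring a moment's care is verifying that $\frac{m+n}{2}=n$ is an integer when $m=n$, independent of the parity of $n$, so that the left-hand side is unambiguously the central binomial coefficient $\binom{2n}{n}$.
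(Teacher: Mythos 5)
Your proposal is correct and matches the paper's intent exactly: the corollary is the specialization $m=n$ of the preceding corollary (equivalently, of the theorem bounding $\max_{0\leq j\leq m+n}\binom{m+n}{j}$ by $2^m\max_{0\leq j\leq n}\binom{n}{j}$), and the paper supplies no further argument. Your added check that $\tfrac{m+n}{2}=n$ is an integer and indexes the central (hence maximal) entry of row $2n$ is the only point of care, and you handled it.
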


\subsection{ Application of Minkowiski's inequality for formulation of some Combinatorial inequalities}
\begin{lemma}[\textbf{Minkowski’s Inequality}]
 Let $ 1\leq p \leq \infty $. For all $f, g \in L^p( \mathbb{R} )$.Then
 \begin{equation}\label{eq:Minkowiskiinequality}
   \|f+g\|_p \leq \|f\|_p+ \|g\|_p.
 \end{equation}
\end{lemma}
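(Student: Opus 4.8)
The plan is to reduce the statement to H\"older's inequality, which is already available as a Lemma above, after first disposing of the two extreme exponents. For $p=1$ I would simply integrate the pointwise triangle inequality $|f(x)+g(x)|\le |f(x)|+|g(x)|$ over $\mathbb{R}$, which gives $\|f+g\|_1\le\|f\|_1+\|g\|_1$ at once. For $p=\infty$ the same pointwise bound holds for almost every $x$, and replacing $|f(x)|$ and $|g(x)|$ by their essential suprema yields $|f(x)+g(x)|\le\|f\|_\infty+\|g\|_\infty$ for a.e.\ $x$, hence $\|f+g\|_\infty\le\|f\|_\infty+\|g\|_\infty$.

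For the main range $1<p<\infty$ I would first record that $f+g$ is measurable and lies in $L^p(\mathbb{R})$: by convexity of $t\mapsto t^p$ one has $|f+g|^p\le(|f|+|g|)^p\le 2^{p-1}(|f|^p+|g|^p)$, so $\int_{\mathbb{R}}|f+g|^p\,dx<\infty$. Next I would start from the pointwise factorization $|f+g|^p=|f+g|\,|f+g|^{p-1}\le\bigl(|f|+|g|\bigr)|f+g|^{p-1}$ and integrate, obtaining
\[
\|f+g\|_p^p\;\le\;\int_{\mathbb{R}}|f|\,|f+g|^{p-1}\,dx\;+\;\int_{\mathbb{R}}|g|\,|f+g|^{p-1}\,dx .
\]
To each of the two integrals on the right I would apply H\"older's inequality with the conjugate pair $p$ and $q=\tfrac{p}{p-1}$. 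Since $(p-1)q=p$, the second factor satisfies $\bigl\||f+g|^{p-1}\bigr\|_q=\bigl(\int_{\mathbb{R}}|f+g|^p\bigr)^{1/q}=\|f+g\|_p^{\,p-1}$, which is finite by the previous step. This produces
\[
\|f+g\|_p^p\;\le\;\bigl(\|f\|_p+\|g\|_p\bigr)\,\|f+g\|_p^{\,p-1}.
\]

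The concluding step is to divide both sides by $\|f+g\|_p^{\,p-1}$. This is legitimate precisely when $0<\|f+g\|_p<\infty$; finiteness was already secured, and if $\|f+g\|_p=0$ the asserted inequality is trivial because its right-hand side is nonnegative. After dividing one gets $\|f+g\|_p\le\|f\|_p+\|g\|_p$, which together with the $p=1$ and $p=\infty$ cases proves the lemma. I do not expect any genuine obstacle here, as this is the classical textbook argument; the only points requiring a little care are the bookkeeping of the two extreme exponents and the degenerate case $\|f+g\|_p=0$, both handled above.
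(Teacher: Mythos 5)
Your proof is correct and complete: it is the classical textbook argument (pointwise triangle inequality for the endpoint exponents $p=1$ and $p=\infty$, and for $1<p<\infty$ the factorization $|f+g|^p\le(|f|+|g|)\,|f+g|^{p-1}$ followed by H\"older's inequality with the conjugate pair $p$, $q=p/(p-1)$, with the degenerate case $\|f+g\|_p=0$ and the finiteness of $\|f+g\|_p$ both handled properly). The paper itself states Minkowski's inequality as a known lemma from real analysis and gives no proof, so there is nothing to compare against; your argument is exactly the standard one found in the cited references.
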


\begin{theorem}
   Let $ 1 \leq p < \infty $. Let $n, m \in\mathbb{ N} \cup \{0\} $, with $ n \geq m $ and $ n-m \equiv 0 (\text{mod 2})$. Then
  \begin{equation}\label{eq:Minkowskimequalnmod2}
     \left[\sum_{i=0}^{\frac{n-m-2}{2}} 2\binom{n}{i}^p + \sum_{i=0}^{m} \left[\binom{m}{i}+\binom{n}{\frac{n-m}{2}+i}\right]^p \right]^{\frac{1}{p}} \leq \left[ \sum_{i=0}^{n}\binom{n}{i}^p \right]^{\frac{1}{p}} + \left[ \sum_{i=0}^{n}\binom{m}{i}^p \right]^{\frac{1}{p}}
  \end{equation}
\end{theorem}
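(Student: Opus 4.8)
The plan is to apply Minkowski's inequality to the pair $f_n,f_m\in L^p(\mathbb{R})$, which gives $\|f_n+f_m\|_p\le\|f_n\|_p+\|f_m\|_p$. By (\ref{eq:Lpnormforfn}) the right-hand side equals $\bigl(\sum_{i=0}^n\binom{n}{i}^p\bigr)^{1/p}+\bigl(\sum_{i=0}^m\binom{m}{i}^p\bigr)^{1/p}$, which is the bound claimed in (\ref{eq:Minkowskimequalnmod2}), since $\binom{m}{i}=0$ for $i>m$ makes the upper index $n$ in the second sum harmless. Hence the whole matter reduces to showing that $\|f_n+f_m\|_p$ equals the bracketed quantity on the left of (\ref{eq:Minkowskimequalnmod2}).

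To compute that norm I would reuse the interval bookkeeping established in the proof of Theorem \ref{eq:theormmod0}. Since $n-m$ is even, $\supp(f_n)$ consists of the $n+1$ unit intervals $I_{n,0},\dots,I_{n,n}$: the first $\tfrac{n-m}{2}$ of them are $I_{n,0},\dots,I_{n,\frac{n-m-2}{2}}$, the middle $m+1$ satisfy $I_{n,\frac{n-m}{2}+i}=I_{m,i}$ for $i=0,\dots,m$, and the last $\tfrac{n-m}{2}$ are $I_{n,\frac{n+m+2}{2}},\dots,I_{n,n}$. On the outer $n-m$ intervals $f_m$ vanishes, so $f_n+f_m=f_n$ there, while on $I_{m,i}$ the sum takes the value $\binom{n}{\frac{n-m}{2}+i}+\binom{m}{i}$. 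Because each $f_k$ is nonnegative, $f_n+f_m$ is a nonnegative simple function supported on pairwise disjoint unit intervals; after the harmless merging of intervals carrying equal values it is in standard form, so by (\ref{eq:powerofsimplefunction}) together with (\ref{eq:lebegueintegralofsimple}),
\[
\|f_n+f_m\|_p^p=\sum_{i=0}^{\frac{n-m-2}{2}}\binom{n}{i}^p+\sum_{i=0}^{m}\left[\binom{n}{\tfrac{n-m}{2}+i}+\binom{m}{i}\right]^p+\sum_{i=\frac{n+m+2}{2}}^{n}\binom{n}{i}^p .
\]

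It remains to fold the last sum onto the first. Using the symmetry $\binom{n}{i}=\binom{n}{n-i}$, as $i$ runs over $\tfrac{n+m+2}{2},\dots,n$ the index $n-i$ runs over $0,\dots,\tfrac{n-m-2}{2}$, so $\sum_{i=\frac{n+m+2}{2}}^{n}\binom{n}{i}^p=\sum_{i=0}^{\frac{n-m-2}{2}}\binom{n}{i}^p$. Substituting this turns the right side of the displayed identity into exactly the $p$-th power of the bracketed expression on the left of (\ref{eq:Minkowskimequalnmod2}); taking $p$-th roots and invoking Minkowski's inequality then finishes the proof. The only points needing care are the index arithmetic across the three blocks and the degenerate case $n=m$, in which the first and last blocks are empty and the statement collapses to $\|2f_m\|_p\le2\|f_m\|_p$; there is no genuine analytic obstacle, since all the structural facts about the intervals $I_{n,i}$ are already available.
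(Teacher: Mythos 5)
Your proposal is correct and follows essentially the same route as the paper: apply Minkowski's inequality to $f_n+f_m$, use the three-block decomposition of $\supp(f_n)$ into the outer $\tfrac{n-m}{2}$ intervals on each side and the middle $m+1$ intervals coinciding with $\supp(f_m)$, and evaluate $\|f_n+f_m\|_p^p$ termwise. The only cosmetic difference is that you obtain the factor $2$ on the outer blocks via the symmetry $\binom{n}{i}=\binom{n}{n-i}$, whereas the paper invokes the evenness of $f_n$ — these are the same fact.
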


\begin{proof}
For the proof, we apply Minkowiski's inequality given in (\ref{eq:Minkowiskiinequality}) to the elements $f_m$ and $f_n$ of the sequence defined in (\ref{eq:sumofshifts}) so that we have $ \|f_m +f_n\|_p \leq \|f_m\|_p +\|f_n\|_p$.
If $ n> m $ and $n-m \equiv 0 (\text{mod 2}) $, then the middle $ m+1$ unit intervals in the support of $ \supp (f_n) $ coincide with the $ m+1$ unit intervals in $ \supp (f_m) $. Therefore,

\begin{align}\label{eq:fnplusfm}
    f_n(x) + f_m(x)&= \sum_{i=0}^{\frac{n-m-2}{2}}\binom{n}{i}\chi_{I_{n,i}}(x)+\sum_{i=0}^{m} \left[\binom{m}{i}+\binom{n}{\frac{n-m}{2}+i}\right]\chi_{I_{m,i}}(x) \nonumber \\
     & + \sum_{i=0}^{\frac{n-m-2}{2}} \binom{n}{\frac{n+m+2}{2}+i} \chi_{I_{n,\frac{n+m+2}{2}+i}}(x),
  \end{align}
  Consideration of the supports of $f_m$ and $f_m$ with the given condition $n > m,$
\begin{equation}\label{eq:integrationwholeofR}
   \int_{-\infty}^{\infty} |f_n(x)+f_m(x)|^pdx = \int_{-\frac{n+1}{2}}^{-\frac{m+1}{2}} |f_n(x)|^p dx +\int_{-\frac{m+1}{2}}^{\frac{m+1}{2}} |f_n(x)+f_m(x)|^p dx + \int_{\frac{m+1}{2}}^{\frac{n+1}{2}} |f_n(x)|^p dx,   \end{equation}
and by the fact that $f_n $ is even function we have
\begin{equation}\label{eq:fromnplus1halftonplusonehaf}
  \int_{-\frac{n+1}{2}}^{-\frac{m+1}{2}} |f_n(x)|^p dx = \int_{\frac{m+1}{2}}^{\frac{n+1}{2}} |f_n(x)|^p dx  = \sum_{i=0}^{\frac{n-m-2}{2}}\binom{n}{i}^p,
\end{equation}
\begin{equation}\label{eq:integrationoversuppfm}
  \int_{-\frac{m+1}{2}}^{\frac{m+1}{2}} |f_n(x)+f_m(x)|^p dx  = \sum_{i=0}^{m} \left[ \binom{m}{i}+\binom{n}{\frac{n-m}{2}+i}\right]^p .
\end{equation}
Plugging the results in (\ref{eq:fromnplus1halftonplusonehaf}) and (\ref{eq:integrationoversuppfm}) into (\ref{eq:integrationwholeofR}) and then taking the $p$th root yields the left hand side of the inequality in (\ref{eq:Minkowskimequalnmod2}). The right hand of (\ref{eq:Minkowskimequalnmod2}) follows normally from the result in (\ref{eq:Lpnormforfn}).
\end{proof}

\begin{corollary}
\begin{equation*}
   \left( 2 +\sum_{i=0}^{m} \left[ \binom{m}{i}+\binom{m+1}{i+1}\right] ^p \right)^\frac{1}{p} \leq \left[ \sum_{i=0}^{m+2}\binom{m+2}{i}^p \right]^{\frac{1}{p}} + \left[ \sum_{i=0}^{m}\binom{m}{i}^p \right]^{\frac{1}{p}}
\end{equation*}
 \end{corollary}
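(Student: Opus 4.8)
The plan is to obtain this corollary as a direct specialization of the preceding theorem (inequality (\ref{eq:Minkowskimequalnmod2})) to the case $n = m+2$. First I would verify that the hypotheses are met: with $n = m+2$ we have $n \geq m$ and $n - m = 2 \equiv 0\ (\text{mod }2)$, so the theorem applies to the pair $(f_m, f_{m+2})$, giving $\|f_m + f_{m+2}\|_p \leq \|f_m\|_p + \|f_{m+2}\|_p$ in the explicit form (\ref{eq:Minkowskimequalnmod2}).

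Next I would substitute $n = m+2$ into the left-hand side of (\ref{eq:Minkowskimequalnmod2}). Here $\frac{n-m-2}{2} = 0$, so the first sum degenerates to the single term $2\binom{m+2}{0}^p = 2$; and $\frac{n-m}{2} = 1$, so the middle summand becomes $\bigl[\binom{m}{i} + \binom{m+2}{i+1}\bigr]^p$. On the right-hand side, the first bracket becomes $\sum_{i=0}^{m+2}\binom{m+2}{i}^p$, while the second becomes $\sum_{i=0}^{m+2}\binom{m}{i}^p = \sum_{i=0}^{m}\binom{m}{i}^p$ since $\binom{m}{i}=0$ for $i>m$. Thus the theorem already yields $\bigl(2 + \sum_{i=0}^{m}[\binom{m}{i}+\binom{m+2}{i+1}]^p\bigr)^{1/p} \leq \bigl[\sum_{i=0}^{m+2}\binom{m+2}{i}^p\bigr]^{1/p} + \bigl[\sum_{i=0}^{m}\binom{m}{i}^p\bigr]^{1/p}$.

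Finally, to match exactly the summand $\binom{m+1}{i+1}$ appearing in the statement, I would weaken the left-hand side using Pascal's identity (\ref{eq:binomialsum}): since $\binom{m+2}{i+1} = \binom{m+1}{i+1} + \binom{m+1}{i} \geq \binom{m+1}{i+1}$ and all the quantities involved are nonnegative, we have $\binom{m}{i} + \binom{m+1}{i+1} \leq \binom{m}{i} + \binom{m+2}{i+1}$ for every $i$; raising to the power $p \geq 1$, summing over $i$, adding $2$, and taking $p$-th roots are all order-preserving on nonnegative quantities, so $\bigl(2 + \sum_{i=0}^{m}[\binom{m}{i}+\binom{m+1}{i+1}]^p\bigr)^{1/p} \leq \bigl(2 + \sum_{i=0}^{m}[\binom{m}{i}+\binom{m+2}{i+1}]^p\bigr)^{1/p}$, and chaining this with the inequality from the previous paragraph gives the claimed bound.

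There is no genuine analytic obstacle, since the corollary is merely a specialization of an inequality already established via Minkowski's inequality; the only points needing care are the degenerate index arithmetic — checking that the first sum in (\ref{eq:Minkowskimequalnmod2}) really collapses to the constant $2$ precisely because $\tfrac{n-m-2}{2}=0$ when $n=m+2$ — and the final monotone weakening step used to reconcile the $\binom{m+1}{i+1}$ of the stated form with the $\binom{m+2}{i+1}$ that the theorem naturally produces.
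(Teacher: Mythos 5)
Your proposal is correct and follows the same route as the paper: specialize the preceding Minkowski-based theorem to $n=m+2$, note that $\tfrac{n-m-2}{2}=0$ collapses the first sum to $2\binom{m+2}{0}^p=2$, and that the vacuous tail $\binom{m}{i}=0$ for $i>m$ fixes the upper limit on the right. The one place you go beyond the paper is actually necessary: direct substitution yields $\binom{m+2}{i+1}$ in the middle sum, whereas the printed corollary has $\binom{m+1}{i+1}$ (almost certainly a typo in the paper, whose proof is just ``set $n=m+2$''), and your monotone weakening via Pascal's identity $\binom{m+2}{i+1}\geq\binom{m+1}{i+1}$ is exactly what is needed to justify the statement as literally written.
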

\begin{proof}
  The proof the Corollary follows by setting $n= m+2$
\end{proof}
\begin{theorem}
   Let $ 1 \leq p < \infty $. Let $ m \in\mathbb{ N} \cup \{0\} $. Then
\begin{equation}\label{eq:nequalsmplus1}
  \left[ 2 +\frac{1}{2}\sum_{j=0}^{m} \left[\binom{m}{j}+ \binom{m+1}{j}\right]^p + \left[\binom{m}{j}+ \binom{m+1}{j+1}\right]^p \right]^{\frac{1}{p}}\leq \left[ \sum_{j=0}^{m}\binom{m}{j}^p \right]^{\frac{1}{p}} + \left[ \sum_{j=0}^{m+1}\binom{m+1}{j}^p \right]^{\frac{1}{p}}.
  \end{equation}
Let $ 1 \leq p < \infty $. Let $n, m \in\mathbb{ N} \cup \{0\} $, with $ n - m \geq 3 $ and $ n-m \equiv 1 (\text{mod 2})$. Then
\begin{align}\label{eq:Minkowskimnotequaltonmod2}
 &\left[ \sum_{i=0}^{\frac{n-m-3}{2}}\binom{n}{i}^p + \binom{n}{\frac{n-m-1}{2}}^p  + \frac{1}{2}\sum_{j=0}^{m} \left(\left[\binom{m}{j}+ \binom{n}{\frac{n-m+2j-1}{2}}\right]^p + \left[\binom{m}{j}+ \binom{n}{\frac{n-m+2j+1}{2}}\right]^p \right)\right]^{\frac{1}{p}} \nonumber \\
& \leq \left[ \sum_{j=0}^{m}\binom{m}{j}^p \right]^{\frac{1}{p}} + \left[ \sum_{j=0}^{n}\binom{n}{j}^p \right]^{\frac{1}{p}}
\end{align}
\end{theorem}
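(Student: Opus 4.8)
The plan is to apply Minkowski's inequality $(\ref{eq:Minkowiskiinequality})$ to the pair of simple functions $f_m,f_n\in L^p(\mathbb{R})$, which gives $\|f_m+f_n\|_p\le\|f_m\|_p+\|f_n\|_p$. The two norms on the right are read off immediately from $(\ref{eq:Lpnormforfn})$, so the whole problem reduces to computing $\|f_m+f_n\|_p^p=\int_{-\infty}^{\infty}|f_m(x)+f_n(x)|^p\,dx$ as an explicit sum of $p$-th powers of binomial coefficients. Since $f_m$ and $f_n$ are simple, so is $f_m+f_n$, and the computation is just a matter of identifying the value of $f_m+f_n$ on each piece of a common refinement of the two supports.

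The first step is to record that closed form. Because $n-m$ is odd, no unit interval $I_{m,j}$ of $\supp(f_m)$ coincides with a unit interval of $\supp(f_n)$; instead, exactly as established in the proof of Theorem \ref{eq:theoremmod1}, the $2m+2$ half-unit intervals $J_{m,0},\dots,J_{m,2m+1}$ of $\supp(f_m)$ (defined as in $(\ref{eq:halfunitintervalsoffn})$) coincide with the middle $2m+2$ half-unit intervals of $\supp(f_n)$. On the two halves $J_{m,2j}$ and $J_{m,2j+1}$ of $I_{m,j}$ the function $f_m$ is the constant $\binom{m}{j}$, while by $(\ref{eq:simplefunction2})$ the function $f_n$ equals $\binom{n}{\frac{n-m+2j-1}{2}}$ on $J_{m,2j}$ and $\binom{n}{\frac{n-m+2j+1}{2}}$ on $J_{m,2j+1}$. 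Outside $\supp(f_m)$ the sum collapses to $f_n$ alone, living on the two tails $\big[-\tfrac{n+1}{2},-\tfrac{m+1}{2}\big]$ and $\big[\tfrac{m+1}{2},\tfrac{n+1}{2}\big]$; each tail is built from the full unit intervals $I_{n,0},\dots,I_{n,\frac{n-m-3}{2}}$ followed by one half-unit interval on which $f_n$ takes the value $\binom{n}{\frac{n-m-1}{2}}$, and the right tail is the mirror image of the left under $x\mapsto-x$ because $f_n$ is even and $\binom{n}{i}=\binom{n}{n-i}$.

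Once the pieces are identified, I would integrate $|f_m+f_n|^p$ term by term: each half-unit subinterval of $\supp(f_m)$ contributes $\tfrac12$ times the $p$-th power of the value of $f_m+f_n$ there, producing the term $\tfrac12\sum_{j=0}^{m}\big(\big[\binom{m}{j}+\binom{n}{\frac{n-m+2j-1}{2}}\big]^p+\big[\binom{m}{j}+\binom{n}{\frac{n-m+2j+1}{2}}\big]^p\big)$; each full unit interval in the tails contributes one $p$-th power $\binom{n}{i}^p$, and the two extreme half-unit intervals together contribute $\binom{n}{\frac{n-m-1}{2}}^p$. Collecting these contributions reproduces the bracketed expression on the left of $(\ref{eq:Minkowskimnotequaltonmod2})$; taking $p$-th roots and inserting $(\ref{eq:Lpnormforfn})$ on the right finishes that inequality. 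Inequality $(\ref{eq:nequalsmplus1})$ is the degenerate case $n=m+1$: here $n-m=1<3$, so the tails carry no full unit interval and are just two half-unit intervals on which $f_n\equiv1$, while on $I_{m,j}$ the values of $f_n$ on the two halves are $\binom{m+1}{j}$ and $\binom{m+1}{j+1}$, and the same term-by-term integration yields the stated bound.

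The routine part is the Minkowski application and reading off the right-hand side from $(\ref{eq:Lpnormforfn})$; the part needing care is the interval bookkeeping, i.e. pinning down exactly which binomial coefficient $f_n$ assumes on each half of each $I_{m,j}$ and correctly separating the full-unit from the half-unit pieces in the two tails. This is precisely the alignment analysis already carried out for Theorem \ref{eq:theoremmod1}, so I would quote $(\ref{eq:simplefunction2})$ and $(\ref{eq:halfunitintervalsoffn})$ rather than redo it; the only genuinely new ingredient is the evenness of $f_n$, which lets the left-tail computation be transcribed to the right tail.
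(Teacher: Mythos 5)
Your strategy is exactly the paper's: write $f_m+f_n$ in closed form on the common refinement of $\supp(f_m)$ and $\supp(f_n)$ into unit and half-unit intervals, integrate $|f_m+f_n|^p$ piece by piece, and combine Minkowski's inequality (\ref{eq:Minkowiskiinequality}) with (\ref{eq:Lpnormforfn}). Your interval bookkeeping --- which binomial coefficient $f_n$ takes on each half of each $I_{m,j}$, the structure of the two tails, the use of evenness and $\binom{n}{i}=\binom{n}{n-i}$ --- is correct. The gap is in the final step, where you assert that collecting these contributions ``reproduces the bracketed expression'' and ``yields the stated bound.'' It does not, in either case, and you never check the constants. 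For (\ref{eq:Minkowskimnotequaltonmod2}) the two tails together contain $n-m-1$ full unit intervals, so by your own accounting and the symmetry you invoke they contribute $2\sum_{i=0}^{(n-m-3)/2}\binom{n}{i}^p$, not $\sum_{i=0}^{(n-m-3)/2}\binom{n}{i}^p$ as in the statement. Your computation thus proves a \emph{stronger} inequality, from which the stated one follows because all terms are nonnegative --- harmless, but it must be said explicitly, since the quantity you compute is not the quantity in the theorem.

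For (\ref{eq:nequalsmplus1}) the mismatch is fatal. The two tails are single half-unit intervals on which $f_n+f_m\equiv 1$, so they contribute $\tfrac12+\tfrac12=1$ to $\|f_m+f_n\|_p^p$, not $2$. What Minkowski actually gives is
\[
\left[\,1+\frac12\sum_{j=0}^{m}\left(\left[\binom{m}{j}+\binom{m+1}{j}\right]^p+\left[\binom{m}{j}+\binom{m+1}{j+1}\right]^p\right)\right]^{\frac{1}{p}}\le \left[\sum_{j=0}^m\binom{m}{j}^p\right]^{\frac{1}{p}}+\left[\sum_{j=0}^{m+1}\binom{m+1}{j}^p\right]^{\frac{1}{p}},
\]
and the version with the constant $2$ cannot be obtained this way because it is false: at $p=1$, $m=0$ the stated left side is $4$ while the right side is $1+2=3$; more generally at $p=1$ the stated left side equals $2^m+2^{m+1}+1$ and the right side equals $2^m+2^{m+1}$. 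The constant $2$ must be corrected to $1$, after which your argument goes through verbatim. (The error originates in the theorem statement itself --- the extreme pieces $J_{n,0}$ and $J_{n,2n+1}$ are half-unit, not unit, intervals --- but a proof cannot ``yield the stated bound'' as you claim; verifying the constants against the stated formulas is precisely the step your write-up skips.)
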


\begin{proof}

 For $n = m+1$ we have
\begin{align}\label{eq:fnplusfmnequalsmplus1}
   f_n(x)+ f_m(x) &=\chi_{J_{n,0}}(x)\nonumber \\
    & +\sum_{j=0}^{m} \left[\binom{m}{j}+ \binom{m+1}{j}\right] \chi_{J_{m,2j}}(x)+ \left[\binom{m}{j}+ \binom{m+1}{j+1}\right] \chi_{J_{m,2j+1}}(x)\nonumber \\
   & + \chi_{J_{n,2n+1}}(x),
\end{align}

whereas for  $n\geq m+3$ and $ n-m \equiv 1 (\text{mod 2})$ we have
  \begin{align}\label{eq:fnplusfmoddandevenn}
   f_n(x)+ f_m(x) &= \sum_{i=0}^{\frac{n-m-3}{2}} \binom{n}{i}\chi_{I_{n,i}}(x)+ \binom{n}{\frac{n-m-1}{2}}\chi_{J_{n,n-m-1}}(x)\nonumber \\
    & +\sum_{j=0}^{m} \left[\binom{m}{j}+ \binom{n}{\frac{n-m+2j-1}{2}}\right] \chi_{J_{m,2j}}(x)+  \left[\binom{m}{j}+ \binom{n}{\frac{n-m+2j+1}{2}}\right] \chi_{J_{m,2j+1}}(x)\nonumber \\
   & +\binom{n}{\frac{n+m+1}{2}}\chi_{J_{n,n+m+1}}(x)+ \sum_{i=0}^{\frac{n-m-3}{2}} \binom{n}{\frac{m+n+3}{2}+i}\chi_{I_{n,\frac{m+n+3}{2}+i}}(x),
\end{align}
 where $I_i$  and $J_j$ are as defined in (\ref{eq:intervalIn}) and (\ref{eq:halfunitintervalsoffn})  respectively. Now (\ref{eq:nequalsmplus1}) follows by applying Minkowski's inequality to (\ref{eq:fnplusfmnequalsmplus1}) and (\ref{eq:Minkowskimnotequaltonmod2}) follows by applying Minkowski's inequality to (\ref{eq:fnplusfmoddandevenn}).
\end{proof}

 \section{Conclusions and remarks}

 In this paper, we  derived  some new combinatorial inequalities  by applying some well known real analytic inequalities. Towards this goal, we calculated  the closed form of expression of some recursively-defined sequence of functions. We could have began our task from the closed form of the sequence. However the author believes that the task of calculation of the closed form expression of the sequence from  the recursive definition of the starting sequence has its own beauty and adds some value to the readers of this work. Different combinatorial inequalities may be derived by using different classes of sequences and some real analytic techniques.

 The key point in the study of recursively defined sequences similar to the ones given in (\ref{eq:sumofshifts}) is based on the choice of the pair $ (\mathcal{A},f_0) $, where $ \mathcal{A }$ an operator, and $ f_0 $ the initial function. Here in this paper, we used the pair $ (\mathcal{A},f_0(x))=\left(\left( E^ {\frac{1}{2}} + E^ {-\frac{1}{2}}\right),\chi_{(-1/2,1/2)}(x) \right) $. The author  technically  choose these operator and initial function $ f_0$  to generate a sequence  of simple function whose distinct values are binomial coefficients. Some important properties of the sequence are studied. Well known combinatorial identities were used in the study of the sequence. For example, we have used Vandermonde's identity in the evaluation of the integrals of the product $ f_nf_m$ over $ \mathbb{R}$.

 \section*{Conflict of interests}
The author declare that there is no conflict of interests regarding the publication of this paper.

\section*{Acknowledgment}
The author is thankful to the anonymous reviewers for their constructive and valuable suggestions.

\section*{Data availability}
The author assures the publishers to provide the data or sources of data that are used in this paper upon request.

\section*{Funding} This Research work is not funded by any institution or person.

\end{document}